\documentclass{amsart}
\usepackage{amssymb,enumerate,stmaryrd}

\usepackage[scr=boondox]  
           {mathalpha}
           
\usepackage{tikz-cd}
\usepackage{hyperref}
\hypersetup{%
  bookmarksnumbered=true,%
  colorlinks=true,%
  linkcolor=blue,%
  citecolor=blue,%
  filecolor=blue,%
  menucolor=blue,%
  urlcolor=blue,%
  bookmarksopen=true,%
  bookmarksdepth=2,%
  pageanchor=true}

\usepackage{mathtools}
\usepackage{todonotes}

\numberwithin{equation}{section}

\usepackage{stmaryrd} 
\usepackage{extarrows} 
\usepackage{enumitem} 

\theoremstyle{plain}
\newtheorem{theorem}{Theorem}[section]
\newtheorem{prop}[theorem]{Proposition}
\newtheorem{lemma}[theorem]{Lemma}
\newtheorem{cor}[theorem]{Corollary}

\newtheorem*{prop*}{Proposition}
\newtheorem*{theorem*}{Theorem}

\theoremstyle{definition}
\newtheorem{defn}[theorem]{Definition}

\newtheorem{example}[theorem]{Example}
\newtheorem{remark}[theorem]{Remark}

\newtheorem{notation}[theorem]{Notation}

\newcommand{\m}{\mathfrak{m}}

\newcommand{\res}{\xlongrightarrow{\simeq}}

\newcommand{\flatdim}{\mathrm{flatdim}}
\newcommand{\projdim}{\mathrm{projdim}}
\newcommand{\curv}{\mathrm{curv }}
\newcommand{\injcurv}{\mathrm{inj curv }}
\newcommand{\cx}{\mathrm{cx }}
\newcommand{\injcx}{\mathrm{inj cx }}
\newcommand{\Ext}{\mathrm{Ext }}
\newcommand{\Tor}{\mathrm{Tor }}
\newcommand{\rank}{\mathrm{rank }}
\newcommand{\Hom}{\mathrm{Hom}}

\newcommand{\RHom}{\mathsf{R}\mathrm{Hom}}
\renewcommand{\L}{\mathsf{L}}

\newcommand{\Mod}{\mathsf{Mod \,R}}
\newcommand{\ModA}{\mathsf{Mod \,A}}
\newcommand{\ModB}{\mathsf{Mod \,B}}

\renewcommand{\mod}{\mathsf{mod \, R}}
\newcommand{\modS}{\mathsf{mod \, S}}

\newcommand{\D}{\mathsf{D(R)}}
\newcommand{\DB}{\mathsf{D(B)}}
\newcommand{\DA}{\mathsf{D(A)}}

\newcommand{\Dpos}{\mathsf{D_\sqsupset (R)}}
\newcommand{\DposA}{\mathsf{D_\sqsupset (A)}}
\newcommand{\DposB}{\mathsf{D_\sqsupset (B)}}

\newcommand{\Dneg}{\mathsf{D_\sqsubset (R)}}
\newcommand{\Db}{\mathsf{D_\square (R)}}

\newcommand{\Dposf}{\mathsf{D_\sqsupset^f (R)}}
\newcommand{\DposfA}{\mathsf{D_\sqsupset^f (A)}}
\newcommand{\DposfS}{\mathsf{D_\sqsupset^f (S)}}
\newcommand{\DposfB}{\mathsf{D_\sqsupset^f (B)}}
\newcommand{\DposfSm}{\mathsf{D_\sqsupset^f (S/\m S)}}

\newcommand{\Dnegf}{\mathsf{D_\sqsubset^f(R)}}
\newcommand{\DnegfA}{\mathsf{D_\sqsubset^f(A)}}
\newcommand{\DnegfS}{\mathsf{D_\sqsubset^f(S)}}
\newcommand{\DnegSm}{\mathsf{D_\sqsubset^f (S/\m S)}}

\newcommand{\Dbf}{\mathsf{D_\square^f(R)}}
\newcommand{\DbfA}{\mathsf{D_\square^f(A)}}
\newcommand{\DbfS}{\mathsf{D_\square^f(S)}}
\newcommand{\DbfSm}{\mathsf{D_\square^f (S/\m S)}}

\newcommand{\jmap}{\mathrm{j}}

\begin{document}

\title{Betti and Bass numbers of \linebreak relative Frobenius maps}

\author[Judd]{Jenna L. Judd}
\address{
Department of Mathematics\\
University of Utah\\ 
Salt Lake City, UT 84112\\ 
U.S.A.}
\curraddr{
Department of Mathematics and Statistics\\
University of Nevada, Reno\\ 
Reno, NV 89557\\ 
U.S.A.}

\begin{abstract}
Motivated by refinements of Kunz's theorem involving the growth of Betti numbers, we study Betti and Bass numbers associated to the relative Frobenius. Under appropriate hypotheses on a local ring homomorphism $\varphi \colon R \to S$, we obtain bounds on the growth of the Betti and Bass numbers of relative Frobenius pushforwards of homologically finite complexes in terms of the corresponding invariants of the residue field of the closed fiber $S/\mathfrak m S$. These results extend previous work on Betti numbers to coefficients and provide a dual perspective via Bass numbers, contributing to the study of how Frobenius actions encode information about ring homomorphisms.   
\end{abstract}

\date{\today}

\keywords{Betti number, Bass number, Relative Frobenius map}

\subjclass[2020]{13A35, 13B10, 13D02}

\maketitle

\section{Introduction}
In commutative algebra, there is a long tradition of studying algebraic objects such as rings and modules through numerical invariants that encode structural information. Many such invariants arise from homological constructions and provide effective tools for detecting algebraic properties. Classical examples include dimension, depth, projective dimension, and injective dimension. These invariants play a central role in identifying important classes of rings, such as Cohen-Macaulay, Gorenstein, and regular rings.

This paper focuses on two homological invariants: Betti numbers and Bass numbers.  For a module over a local ring, Betti numbers measure the size of a minimal free resolution, while Bass numbers record the size of a minimal injective resolution. These invariants extend naturally to complexes in the derived category and retain their structural significance in that setting. Section 2 recalls basic properties of Betti and Bass numbers of homologically bounded complexes over Noetherian local rings, together with results on their asymptotic behavior. In particular, we review work of Avramov from the 1990s and of Avramov, Hochster, Iyengar, Miller and Yao from the early 2000s, including characterizations of regularity in terms of the growth of Betti numbers.

A central theme of this paper is the interaction between homological invariants and ring homomorphisms. Our primary focus is the Frobenius endomorphism, which plays a fundamental role in commutative algebra and algebraic geometry. The importance of the Frobenius map is already evident in the study of singularities, where many techniques depend heavily on the characteristic of the base field. In 1964, Hironaka \cite{Hir64} proved that algebraic varieties in characteristic zero admit resolutions of singularities. In contrast, resolutions of singularities in positive characteristic are known not to exist in general, and the Frobenius endomorphism serves as an important tool in studying singular behavior. A classical result of Kunz \cite{Kun69} from 1969 shows that a Noetherian ring of prime characteristic is regular if and only if its Frobenius endomorphism is flat.

Subsequent work has produced refinements of Kunz's theorem that involve modules or complexes with a Frobenius action. A notable result, proved in 2012 by Avramov,  Hochster, Iyengar, and Yao \cite{AHIY12}, states that a Noetherian local ring is regular if the Frobenius pushforward of any bounded complex of modules with finitely generated homology is flat. Their argument relies on an analysis of the growth of Betti numbers associated to such complexes. An exposition of this result and some of its consequences appear in Section 3.

The main goal of this paper is to study Betti and Bass numbers associated to the relative Frobenius. Given a ring homomorphism $\varphi\colon R \to S$, the relative Frobenius homomorphism is obtained from the pushout of $\varphi$ along the Frobenius endomorphism on $R$. Results of Radu \cite{Rad92} and Andr{\'e} \cite{And93} from the early 1990s show that flatness of the relative Frobenius characterizes regularity of the map $\varphi$, providing a relative analogue of Kunz's theorem. In light of this result, it is natural to ask whether a relative version of the Avramov-Hochster-Iyengar-Yao theorem also holds.

Recent work of McDonald \cite{McD25} provides partial progress in this direction. Using techniques based on the growth of Betti numbers, McDonald proved that if a local ring homomorphism $\varphi \colon R \to S$ has finite flat dimension, then $\varphi$ is regular if and only if the relative Frobenius pushforward of any derived fiber of $\varphi$ has finite flat dimension. This generalizes a result of Dumitrescu \cite{Dum96} from 1996, which assumes that $\varphi$ is flat and shows that $\varphi$ is regular precisely when the relative Frobenius has finite flat dimension. Section 3 reviews the construction of the relative Frobenius, McDonald's theorem, and related consequences.

Section 4 contains the main results of this paper. We extend McDonald's result to coefficients and establish a dual statement involving Bass numbers. Specifically, suppose $\varphi \colon (R,\m,k) \to (S,\mathfrak{n},\ell)$ is a finite, flat, local ring homomorphism. For a homologically finite complex of $S$-modules, we obtain bounds on the growth of the Betti and Bass numbers of its relative Frobenius pushforward in terms of the corresponding invariants of the residue field $\ell$ of $S/\m S$, the closed fiber of $\varphi$. The explicit statement of the result can be found in \hyperref[jenna]{Theorem \ref*{jenna}}. Although not treated in detail here, one can relax the hypothesis to finite flat dimension and work with the derived fiber $k \otimes_R^\L S$ in place of $S/\m S$, as discussed in \hyperref[conditions]{Remark \ref*{conditions}}.

These results contribute to the broader program of understanding how homological invariants reflect the structure of ring homomorphisms and, in particular, how the Frobenius map encodes information about regularity and singularities in prime characteristic.

\subsection*{Acknowledgments}
My advisor, Srikanth Iyengar, has been an invaluable source of guidance and support in the various stages of this endeavor. I am deeply grateful for his patience, warmth, and encouragement. Many of the ideas in this paper were shaped through conversations with him and Peter McDonald, and I thank them both for their time and helpful insights. \\~\\
This work is partly supported by National Science Foundation grant DMS-200985.

\vfill

\section{Homological Invariants over Local Rings}

In this paper we explore the growth of homological invariants along Frobenius maps. To provide context, we first recall basic definitions and properties of such invariants over general local rings, as well as their asymptotic behavior. Our primary references for this material are \emph{Cohen-Macaulay rings} \cite{BH98} by Brunz and Herzog, \emph{Derived Category Methods in Commutative Algebra} \cite{CFH24} by Christensen, Foxby, and Holm, and \emph{Infinite Free Resolutions} \cite{Avr98} by Avramov.

\subsection*{Derived Category}

We begin by establishing the homological framework of our study, recalling in particular the notion of the derived category.

Let $(R, \m, k)$ be a Noetherian, local, commutative ring and let $\Mod$ denote the category of $R$-modules. Its full subcategory consisting of finitely generated $R$-modules is denoted $\mod$. 

The \emph{derived category} of $R$, denoted $\D$, is the category obtained by inverting quasi-isomorphisms in the category of chain complexes over $\Mod$. See \cite[\S 6.4]{CFH24} for details of its construction. Any $R$-module can be viewed as a complex in $\D$ via the full embedding $\Mod \hookrightarrow \D$. 

We write $\Dpos$ for the subcategory of $\D$ whose complexes $A$ have the property that $H_n(A)=0$ for all $n \ll 0$. Similarly, $\Dneg$ denotes the subcategory of $\D$ whose complexes have the property that $H_n(A) =0$ for all $n \gg 0$. The corresponding category for complexes with bounded homology on the right and the left is denoted $\Db$.  

In what follows, we focus on the full subcategories $\Dposf, \Dnegf$ and $\Dbf$ of $\Dpos$, $\Dneg$ and $\Db$, respectively, for which the complexes $A$ have $H_n(A)$ finitely generated for all $n$. 

\subsection*{Betti Numbers}

One of the primary invariants studied in this paper are Betti numbers, which, for a module over a local ring, measure the size of a minimal free resolution. These invariants extend naturally to the derived setting, where they exhibit analogous behavior. We recall their definition in this broader context.

\begin{defn}\cite[Definition 16.4.14]{CFH24}
Let $M \in \Dposf$. The $n^{th}$ \emph{Betti number of $M$ over $R$} is
\[
\beta_n^R(M) := \rank_{k}\Tor_n^R(k, M).
\] 
Each $\beta_n^R(M)$ is finite since $M \in \Dposf$. The \emph{Poincar{\'e} Series of $M$ over $R$} is the formal power series
\[
P_M^R(t) := \sum_{n=0}^{\infty} \beta_n^R(M)t^n \in \mathbb{Z} \llbracket t \rrbracket[t^{-1}].
\]
\end{defn}

\begin{remark} Let $F \res M$ be a minimal free resolution of $M$ over $R$. A proof of the existence of such a resolution can be found in \cite[B.63]{CFH24}. The $n^\text{th}$ Betti number of $M$ can also be computed as $\rank_R F_n$. Indeed, applying the functor $k \otimes_R -$ to $F$ yields a complex with zero differential, and so
\[
\Tor_n^R(k, M) \cong H_n(k \otimes_R F) \cong k \otimes_R F_n.
\]
Hence,
\[
\rank_{k} \Tor_n^R(k, M) =\rank_k (k \otimes_R F_n).
\]
\end{remark}

\begin{remark} Fix $M, N \in \Dposf$. If $F_M \res M$ and $F_N \res N$ are minimal free resolutions of $M$ and $N$ over $R$, respectively, then $F_M \otimes_R F_N$ is a minimal free resolution of $M \otimes_R^\L N$, where $\otimes_R^\L$ is the derived tensor product (see \cite[\S 12.2]{CFH24}). We have $M \otimes_R^\L N \in \Dposf$ since $M, N \in \Dposf$ (see \cite[Proposition 12.2.10]{CFH24}). The Betti numbers of $M \otimes_R ^\L N$ are thus given by 
\[
\beta_n^R(M \otimes_R^L N) = \sum_{i+j = n} \beta_i^R(M) \cdot \beta_j^R(N).
\]
Hence, the Poincar{\'e} Series of $M \otimes_R^\L N$ over $R$ can be computed as the product of the Poincar{\'e} Series of $M$ and $N$ over $R$
\[
P_{M \otimes_R^\L N}^R(t) = P_M^R(t) \cdot P_N^R(t).
\]
See \cite[Proposition 16.4.17]{CFH24} for a proof of the above equality.
\end{remark}

\subsection*{Bass Numbers}

Dual to Betti numbers are the Bass numbers, the second invariant of interest in this paper. They measure the size of minimal injective resolutions, and we recall their definition in context of the derived category.

\begin{defn}\cite[Definition 16.4.29]{CFH24}
Let $M \in \Dnegf$. The $n^\text{th}$ \emph{Bass number of $M$ over $R$} is defined as 
\[
\mu_R^n(M) := \rank_k \Ext_R^n (k, M).
\]
\end{defn}

\begin{remark}
The $n^\text{th}$ Bass number $\mu_R^n(M)$ of $M$ counts the number of copies of the injective hull $E_R(k)$ of $k$ in $I^n$ where $M \res I$ is a minimal injective resolution $M$ (see \cite[Theorem 16.4.38]{CFH24}). 
\end{remark}

\subsection*{Curvature and Complexity} 

In general, it is difficult to obtain information about the sequences of Betti and Bass numbers. To address this, we study their asymptotic growth, as captured by the notions of \emph{curvature} and \emph{complexity}, introduced by Avramov. See \emph{Infinite Free Resolutions} \cite[\S 4.2]{Avr98} for an exposition on these concepts.

\begin{defn}
Let $M \in \Dposf$. The \emph{curvature of $M$ over $R$} is
\[
\curv_R \; M = \limsup_n \sqrt[n]{\beta_n^R(M)}.
\]
It is the reciprocal of the radius of convergence of the Poincar{\'e} Series, $P_M^R(t)$.
The \emph{complexity of $M$ over $R$} is 
\[
\cx_R \; M = \inf \left\{ d \in \mathbb{N} \; \middle| \; \begin{aligned} & \; \text{there exists} \; c \in \mathbb{R} \; \text{such that}  \\  &\beta_n^R (M) \leq cn^{d-1} \; \text{for all} \; n \gg 0  \end{aligned} \right\}.
\]
Replacing the Betti numbers by Bass numbers in the above formulas defines the \emph{injective curvature} $\injcurv_R \; M$ \emph{of $M$ over $R$} and the \emph{injective complexity} $\injcx_R \; M$ \emph{of $M$ over $R$}. 
\end{defn}

\begin{remark} \label{curvcxgrowth}
For any $M \in \mod$, the following hold (see \cite[Remark 4.2.3]{Avr98}):
\begin{enumerate}[label=(\roman*)]
\item $\projdim_R M < \infty \iff  \cx_R \; M =0 \iff \curv_R \; M < 1$
\item $\cx_R \; M < \infty \implies \curv_R \;M \leq 1$
\item $\cx_R \; k < \infty \iff \curv_R \; k \leq 1$
\end{enumerate}
\end{remark}

Curvature detects important structural properties of the local ring $R$, such as whether it is regular or a complete intersection. 

\begin{defn}
A local ring $R$ is \emph{regular} if its maximal ideal $\m$ is generated by a system of parameters. It is a \emph{complete intersection} if its $\m$-adic completion $\widehat{R}$ is isomorphic to a quotient of a regular local ring by an ideal generated by a regular sequence.
These concepts are discussed in detail in \cite[\S 2.2 and \S 2.3]{BH98}.
\end{defn}

\begin{remark} \label{regci}
For $M \in \Dbf$, we always have  
\begin{enumerate}[label=(\roman*)]
\item $\curv_R \; M \leq \curv_R \; k < \infty \quad \text{and} \quad \cx_R \; M \leq \cx_R \; k$
\item $\curv_R \; k < 1 \iff R \text{   is regular}$
\item $\curv_R \; k \leq 1 \iff R \text{   is a complete intersection}$
\end{enumerate}
There are analogous results when replacing curvature and complexity with injective curvature and injective complexity, respectively. These claims follow from \hyperref[curvcxgrowth]{Remark \ref*{curvcxgrowth}}, \cite[Proposition 4.2.4]{Avr98} and \cite[Theorem 8.1.2]{Avr98} for $M \in \mod$. They were later generalized to $M \in \Dbf$ by Avramov, Iyengar and Miller \cite[Proposition 7.1.3 (5) and Theorem 7.1.5 (iv)]{AIM06}.
\end{remark}

\begin{remark} \label{injineq}
One always has 
\[
\curv_R \; k = \injcurv_R \; k \quad \text{and} \quad \cx_R \; k = \injcx_R \; k
\]
since 
\[
\beta_n^R (k) = \rank_k \Tor_n^R(k, k) = \rank_k \Ext_R^n(k, k) = \mu_R^n (k).
\]
In particular, \hyperref[regci]{Remark \ref*{regci}} then gives
\[
\injcurv_R \; M \leq \curv_R \; k \quad \text{and} \quad \injcx_R \; M \leq \cx_R \; k
\]
for any $M \in \Dbf$.
\end{remark}

\begin{defn} \label{extremal}
A complex $M \in \Dbf$ is said to be \emph{extremal} if 
\[
\curv _R \; M = \curv_R \; k \quad \text{and} \quad \cx_R \;M = \cx_R \; k.
\]
It is \emph{injectively extremal} if 
\[
\injcurv _R \; M = \curv_R \; k \quad \text{and} \quad \injcx_R \;M = \cx_R \; k.
\]
Extremality was first introduced for modules in \cite[\S 2]{Avr96}. See \cite[\S 11]{AIM06} for extremality of a complex.
\end{defn}

\section{The Frobenius and Relative Frobenius}

This section provides an overview of the Frobenius endomorphism and the associated relative Frobenius maps. We recall their definitions, introduce notation, and review known results describing how these maps relate to the curvature and complexity of modules and complexes. These results allow us to recover important structural properties of the rings and ring maps in consideration. In particular, we recall Kunz's classical theorem, and outline refinements of the theorem, including a module version by Avramov, Hochster, Iyengar, and Yao \cite{AHIY12}, a relative version due to Radu \cite{Rad92} and Andr{\'e} \cite{And93}, and a partial generalization of the relative version established by McDonald \cite{McD25}.

\subsection*{The Frobenius Endomorphism}

We begin by recalling the basics of the Frobenius endomorphism and the notation we will use throughout.

Let $(R, \m, k)$ be a Noetherian, local, commutative ring of prime characteristic $p > 0$. The \emph{Frobenius endomorphism} $F\colon R \to R$ is defined by $F(r) = r^p$, which is a ring homomorphism since $\mathrm{char}(R) = p$. For any integer $e \geq 1$, the $e$-fold composition $F^e\colon R \to R$ is given by $F^e(r) = r^{p^e}$.

To distinguish the source and target of the Frobenius map, we often denote the target as the Frobenius pushforward $F_*^e R$ and write $F^e\colon R \to F_*^e R$. This convention allows us to view the target as an $R$-module via the Frobenius action. Similarly, for an $R$-module $M$, we write $F_*^e M$ for its pushforward along $F^e$, with $R$-module structure defined by $r \cdot m = r^{p^e} m$.

From here on, we assume $R$ is \emph{$F$-finite}, meaning that $F^e\colon R \to F_*^e R$ is a finite morphism for some (and hence all) $e \geq 1$; equivalently, $F_*^e R$ is finitely generated as an $R$-module. Under this assumption, if $M \in \mod$, then $F_*^e M \in \mod$ as well. In particular, if $M \in \Dposf$, then $F_*^e M \in \Dposf$.

Regularity of $R$ is characterized by the Frobenius map, by a result of Kunz.

\begin{theorem}\cite[Theorem 2.1]{Kun69} \label{kunz}
The Frobenius map $F^e \colon R \to F_*^eR$ is flat for some (or equivalently, all) $e \geq 1$ if and only if $R$ is regular. \qed
\end{theorem}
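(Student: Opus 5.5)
We prove both implications of Kunz's theorem for a local ring $(R,\m,k)$ of characteristic $p$. Since the paper assumes $F$-finiteness, $F^e_*R$ is a finitely generated $R$-module. Over a local ring, flat and finitely generated means free, so flatness of $F^e$ is equivalent to $\projdim_R F_*^eR = 0$, and in particular to $\projdim_R F^e_*R<\infty$. The plan has three steps: the easy direction (regular implies flat), the hard direction (flat implies regular), and the equivalence of ``some $e$'' with ``all $e$''.

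\emph{Regular implies flat.} If $R$ is regular, then so is $\widehat R$, and flatness of $F^e$ can be checked after completion because $\widehat R$ is faithfully flat over $R$. By Cohen's structure theorem, $\widehat R \cong K\llbracket x_1,\dots,x_d\rrbracket$ with $K$ a field. Here $F^e_*\widehat R$ is free over $\widehat R$ with basis the monomials $x^{a}$, $0\le a_i<p^e$, tensored with a basis of $F^e_*K$ over $K$. So $F^e$ is flat for every $e$. There is an alternative avoiding Cohen: $\m$ is generated by a regular sequence $x_1,\dots,x_d$, and the images $x_i^{p^e}$ of the $x_i$ under $F^e$ form a regular sequence on $F^e_*R$, since $R$ is Cohen--Macaulay. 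A module that is maximal Cohen--Macaulay over a regular local ring has finite projective dimension by Auslander--Buchsbaum (because $\projdim_R M<\infty$ when $R$ is regular), and that projective dimension is $\operatorname{depth} R-\operatorname{depth} F^e_*R=0$. Hence $F^e_*R$ is free.

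\emph{Flat implies regular.} This is the key direction and the main obstacle. Suppose $F^e$ is flat for one $e$. Then $F^{ne}=(F^e)^n$ is flat for all $n\ge 1$, as a composition of flat maps. Flatness gives the base change identity
\[
\Tor_i^R(k,F^{ne}_*R)=0 \quad\text{for } i>0.
\]
I would then use the standard lemma that $F^{ne}_*R$ computes $F^{ne}(\m)R=\m^{[p^{ne}]}\subseteq \m^{p^{ne}}$. For $N=\beta_1^R(k)$ and a minimal free resolution $G$ of $k$, flat base change gives a free resolution $G\otimes_R F^{ne}_*R$ of $F^{ne}_*(R/\m^{[p^{ne}]})$ over $F^{ne}_*R$. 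Its differentials have entries in $\m^{[p^{ne}]}$, so they vanish modulo $\m$ for $n$ large enough. Comparing, $\Tor_i^R(R/\m^{[p^{ne}]},k)$ has rank $\beta_i^R(k)$ times a constant.

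The cleaner route, which I would commit to, is the Herzog-style argument. Choose $n$ so that $\m^{[p^{ne}]}\subseteq \m^{s}$, where $s$ is so large that the Artin--Rees lemma makes the maps $\Tor_i^R(R/\m^{s},k)\to \Tor_i^R(k,k)$ zero for $i=d+1$, with $d=\operatorname{edim}R$. Flatness of $F^{ne}$ identifies $\Tor_i^R(R/\m^{[p^{ne}]},k)$ with the $F^{ne}$-base change of $\Tor_i^R(k,k)$. The natural surjection then forces $\Tor_{d+1}^R(k,k)=0$, so $\projdim_R k<\infty$, and $R$ is regular by Serre's theorem (equivalently, $\curv_R k<1$ in Remark \ref{regci}). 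The delicate step I expect to fight with is making this vanishing precise: showing that the map induced by $R/\m^{[q]}\to k$ on $\Tor$ is both zero (by Artin--Rees/minimality) and injective after the flat base change.

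\emph{Some $e$ versus all $e$.} If $R$ is regular, the first step gives flatness of $F^e$ for all $e$. If $F^e$ is flat for one $e$, the second step gives regularity, so both ``some'' and ``all'' are equivalent to $R$ being regular.
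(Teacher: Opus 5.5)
\textbf{Gap in flat $\Rightarrow$ regular.} Your regular $\Rightarrow$ flat argument is fine, as is the reduction of ``some $e$'' to ``all $e$''. The gap is in flat $\Rightarrow$ regular, at exactly the step you flag, and the step is circular rather than merely delicate. Write $q=p^e$, let $G$ be the minimal free resolution of $k$, and set $G^{[q]}=G\otimes_R F^e_*R$. Two of your facts are correct:
\begin{itemize}
\item Flatness makes $G^{[q]}$ a minimal free resolution of $R/\m^{[q]}$ with the same ranks as $G$, so $\dim_k\Tor_i^R(R/\m^{[q]},k)=\beta_i^R(k)$.
\item Artin--Rees makes the map $\Tor_{d+1}^R(R/\m^{[q]},k)\to\Tor_{d+1}^R(k,k)$ induced by $R/\m^{[q]}\to k$ vanish for $q\gg0$.
\end{itemize}
But a zero map between spaces of equal dimension is no contradiction. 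You then want this map to be injective. A zero map is injective only if its source is $0$, i.e.\ only if $\beta_{d+1}^R(k)=0$, which is the conclusion itself. Nor is there any mechanism for injectivity. For $R=k\llbracket x\rrbracket$, where $F$ is flat, the comparison map $G^{[q]}\to G$ is multiplication by $x^{q-1}$ in degree $1$. So $\Tor_1^R(R/(x^q),k)\to\Tor_1^R(k,k)$ is the zero map between two copies of $k$, and nothing in your argument distinguishes degree $d+1$ from degree $1$.

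\textbf{The missing idea.} Apply the Krull/Artin--Rees input to the acyclic complex $G^{[q]}$ itself rather than to the comparison map.
\begin{itemize}
\item Let $t=\mathrm{depth}\,R$, let $\underline{x}$ be a maximal $R$-regular sequence, and take $0\neq z$ in the socle of $\bar R=R/(\underline{x})$.
\item Choose $s$ with $z\notin\m^s\bar R$ (Krull intersection theorem), and take $q\geq s$.
\item The truncation $G^{[q]}_{\geq t}$ resolves a $t$-th syzygy, on which $\underline{x}$ is regular. Hence $\bar G=G^{[q]}_{\geq t}\otimes_R\bar R$ is still exact in degrees $>t$.
\item For a basis element $e$ of $\bar G_i$ with $i>t$, one has $\partial(ze)\in z\m^{[q]}\bar G_{i-1}=0$. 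So $ze$ is a boundary, and therefore lies in $\m^{[q]}\bar G_i\subseteq\m^s\bar G_i$. This forces $z\in\m^s\bar R$, a contradiction.
\end{itemize}
Hence $\bar G_{t+1}=0$, so $\beta^R_{t+1}(k)=\rank_R G^{[q]}_{t+1}=0$. Thus $\projdim_R k<\infty$ and $R$ is regular.

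\textbf{Comparison with the paper.} The paper does not prove this theorem; it cites Kunz. In Remark~\ref{AHIYimplieskunz} it obtains the hard direction from Theorem~\ref{AHIY} instead. There, $F^e_*R$ free gives $\curv_R F^e_*R=0$, and extremality gives $\curv_R k=\curv_R F^e_*R<1$. Remark~\ref{regci}(ii) then gives regularity.
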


A generalization of Kunz's Theorem for modules was proved by Avramov, Hochster, Iyengar and Yao. We present this result with the language of extremality and injective extremality in the derived setting as introduced in \hyperref[extremal]{Definition \ref*{extremal}}, and show how it recovers Kunz's Theorem in \hyperref[AHIYimplieskunz]{Remark \ref*{AHIYimplieskunz}}.

\begin{theorem}\cite[Corollary 5.6 and Proposition 3.2]{AHIY12} \label{AHIY}
Let $M \in \Dbf$ and assume $M \neq 0.$ Then $F_*^eM$ is extremal and injectively extremal for $e \geq 1$.  \qed
\end{theorem}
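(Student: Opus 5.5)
The plan is to prove the two inequalities $\curv_R F^e_*M \ge \curv_R k$ and $\cx_R F^e_*M \ge \cx_R k$; the reverse inequalities are already given by Remark \ref{regci}(i) and Remark \ref{injineq}, since $F^e_*M \in \Dbf$ by $F$-finiteness. The natural tool is the tensor formula for Betti numbers together with the fact that Frobenius pushforward interacts well with $k$. Since $F^e\colon R\to F^e_*R$ is local and $R$ is $F$-finite, the field $F^e_*k$ is a finite $k$-vector space, and $F^e_*k$ is a finite direct sum of copies of $k$ as an $R$-module.

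The first step is a base-change isomorphism. For $N \in \Dposf$ we have
\[
k\otimes^{\L}_R F^e_*M \simeq F^e_*\bigl((F^e_*\text{-structure})\bigr),
\]
and more precisely we compare $\Tor^R(k,F^e_*M)$ with $\Tor$ over the source copy of $R$ via the map $F^e$. The approach of \cite{AHIY12} reduces to showing that $k\otimes_R^{\L}F^e_*M$ contains, up to shifts, enough homology to dominate the Betti numbers of $k$. Concretely, I would take a minimal free resolution $P\res M$ and look at $F^e_*P$. Its differentials have entries in $\m$, so after pushforward they have entries in $\m^{[p^e]}\subseteq \m^{p^e}$. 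Now choose $e$ large enough, or argue for every $e\ge 1$ via Koszul complexes, so that the relevant comparison maps vanish. This produces a surjection, or a split injection on homology, from $\Tor^R_*(k,F^e_*M)$ onto something like $\Tor^R_*(k, k)\otimes_k H_*(k\otimes^{\L}_{F^e_*R} F^e_* M)$. That would give coefficientwise inequalities
\[
P^R_{F^e_*M}(t) \succcurlyeq P^R_k(t)\cdot c\, t^{s}
\]
for some $c>0$ and shift $s$. Curvature and complexity are insensitive to shifts and positive scalars, so both inequalities would follow.

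The main obstacle is making this vanishing statement precise for every $e\ge1$, not just large $e$. The key input I would try first is the AHIY result that for $e\ge1$ the natural map $\Tor^R(k,F^e_*M)\to \Tor^R(k, F^e_*M\otimes\ldots)$ factors through the $k$-vector space $k\otimes_R F^e_* R$ with zero "higher" contributions. This rests on the observation that the image of $\m$ in $F^e_*R$ lies in $\m^{[p]}$, and on the result that the map from $R$ to a ring in which $\m$ maps into $\m^{[p]}$ induces zero on positive degree $\Tor$ with coefficients in $k$. I would prove this by comparing with the Koszul complex on a minimal generating set of $\m$, where the lifted map is multiplication by $p$-th powers and so is null on the exterior part.

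For injective extremality, I would dualize. Bass numbers satisfy $\mu^n_R(F^e_*M)=\beta_n^R(\RHom_R(F^e_*M,E))$ after Matlis duality on the completion. Since $F^e_*$ commutes with Matlis duality up to the finite extension $F^e_*k/k$, the dual of $F^e_*M$ is $F^e_*$ of a complex in $\Dbf(\widehat R)$. The Betti argument then applies verbatim; completion changes neither Betti nor Bass numbers.
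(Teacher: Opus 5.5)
Your reduction to lower bounds is fine: since $F^e_*M\in\Dbf$, Remarks \ref{regci}(i) and \ref{injineq} give the upper bounds. So everything hinges on the inequalities $\curv_R F^e_*M\ge\curv_R k$ and $\cx_R F^e_*M\ge\cx_R k$ of Remark \ref{AHIYgeq}. The paper does not prove these; it quotes them from \cite{AHIY12} (Proposition 4.3 and Corollary 5.6 there). Your proposal does not supply a substitute, and the concrete steps you sketch fail.

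First, $F^e_*P$ consists of copies of $F^e_*R$, which is not free over $R$ unless $R$ is regular (Kunz). So $k\otimes_R F^e_*P$ does not compute $\Tor^R(k,F^e_*M)$, and there are no ``entries'' over the source ring. What is true is the reverse of your claim: the source maximal ideal acts through $\m^{[p^e]}$, while the differentials still have entries in $\m$.

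Second, the proposed surjection of $\Tor^R(k,F^e_*M)$ onto $\Tor^R(k,k)\otimes_k H(k\otimes^{\L}_{F^e_*R}F^e_*M)$ cannot exist. For $M=k$, the source has Poincar\'e series $d\,P^R_k(t)$ and the target $d\,P^R_k(t)^2$, where $d=\rank_k F^e_*k$. Comparing coefficients of $t$ forces $\m=0$.

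Third, the ``key input'' is false for general maps. Take $k$ of characteristic $p$ and $\varphi\colon k[x]/(x^2)\to k[y]/(y^{2p})$, $x\mapsto y^p$. This sends $\m$ into $\mathfrak{n}^{[p]}$, yet the comparison map of minimal resolutions of $k$ is $1,\ y^{p-1},\ 1$ in degrees $0,1,2$, so it is nonzero on $\Tor_2(k,k)$. Your Koszul computation only controls the subalgebra $\bigwedge(\m/\m^2)\subseteq\Tor^R(k,k)$.

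Finally, even granting that the Frobenius kills $\Tor^R_{>0}(k,k)$, nothing in the proposal turns this into a lower bound on $\beta^R_n(F^e_*M)$ for arbitrary $M$ and every $e\ge1$. That passage is the missing idea, and it is the entire content of the theorem.

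The Bass-number step has a separate error. The identity $\mu^n_R(X)=\beta^R_n(\Hom_R(X,E))$ is correct. However, the Matlis dual of $M\in\Dbf$ has artinian, not finitely generated, homology (for $M=R$ it is $E$). So it does not lie in $\mathsf{D}^f_\square(\widehat R)$, and no Betti-number result for $\Dposf$ can be applied ``verbatim'' to $F^e_*\Hom_R(M,E)$.

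This is exactly why the paper, following \cite[Proposition 3.2]{AHIY12}, first passes to the Koszul complex $K^M$, whose homology has finite length. Then $N=\Hom_R(K^M,E)\in\Dbf$, with $\injcurv_R F^e_*M=\curv_R F^e_*N$ and $\injcx_R F^e_*M=\cx_R F^e_*N$. Alternatively, after completing, you can use a dualizing complex as in Lemma \ref{dualizinghom} and Proposition \ref{bettibassdual}. These give $\mu^n_R(F^e_*M)=\beta^R_n\bigl(F^e_*(M^\dagger)\bigr)$ with $M^\dagger\in\Dbf$. Either way, the injective statement reduces to the Betti statement, which is the part your proposal leaves unproved.
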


\begin{remark}
The extremality of $F_*^eM$ is shown in \cite[Corollary 5.6]{AHIY12}. To see that $F_*^eM$ is injectively extremal, apply \cite[Proposition 3.2]{AHIY12} to $F_*^eM$ to get
\[
\injcurv_R \; F_*^eM = \curv_R \; F_*^e N \quad \text{and} \quad \injcx_R \; F_*^eM = \cx_R \; F_*^eN.
\]
where $N = \Hom_R (K^M, E) \in \Dbf$ with $E$ the injective hull of $k$ over $R$ and $K^M$ the Koszul complex on $M$. Then by \cite[Corollary 5.6]{AHIY12}, $F_*^eN$ is extremal, so $\curv_R \; F_*^e N = \curv_R \; k$ and $\cx_R \; F_*^eN = \cx_R \; k$, showing that
\[
\injcurv_R \; F_*^eM = \curv_R \; k \quad \text{and} \quad \injcx_R \; F_*^eM =\cx_R \; k.
\]
\end{remark}

The proof that $F_*^eM$ is extremal and injectively extremal shows a stronger statement that is useful for arguments made in Section 4. We state these results in the next remark for our convenience. 

\begin{remark} \label{AHIYgeq}
Proposition 4.3 in \cite{AHIY12}, an ingredient to the proof of \hyperref[AHIY]{Theorem \ref*{AHIY}}, gives the following:
\begin{enumerate}[label=(\roman*)]
\item For $M \in \Dposf$ and $M \not \simeq 0$, there are inequalities
\[
\curv_R \; F_*^e M \geq \curv_R \; k \quad \text{and} \quad \cx_ R \; F_*^e M \geq \cx_R \; k 
\]
\item For $M \in \Dnegf$ and $M \not \simeq 0$, there are inequalities
\[
\injcurv_R \; F_*^e M \geq \curv_R \; k \quad  \text{and} \quad \injcx_ R \; F_*^e M \geq \cx_R \; k 
\]
\item For $M \in \Dbf$ and $M \not \simeq 0$, equalities hold in (i) and (ii).
\end{enumerate}
\end{remark}

The next remark isolates a key part of the argument showing that \hyperref[AHIY]{Theorem \ref*{AHIY}} recovers Kunz's Theorem.

\begin{remark} \label{AHIYappl}
An implication of \hyperref[AHIY]{Theorem \ref*{AHIY}} is that
\begin{enumerate}[label=(\roman*)]
\item $R$ is regular $\iff \curv_R \; F_*^e R < 1 \iff \projdim_R \;F_*^e R < \infty$
\item $R$ is a complete intersection $\iff \cx_R \; F_*^eR < \infty \iff  \curv_R \; F_*^eR  \leq 1$ for some $e \geq 1$
\end{enumerate}
Indeed, \ref{curvcxgrowth} and \ref{regci} gives
\[
R \; \; \text{is a complete intersection} \iff \curv_R \; k \leq 1 \iff  \cx_R \; k < \infty
\]
and since $F_*^e R$ is extremal for $e \geq 1$ (\hyperref[AHIY]{Theorem \ref*{AHIY}}), we have $\curv_R \; F_*^eR = \curv_R \; k$ and $\cx_R \; F_*^eR = \cx_R \; k$. This shows part (ii). Part (i) then follows from \ref{curvcxgrowth} and \ref{regci}:
\[
R \; \; \text{is regular} \iff \curv_R \; k <1 \iff \curv_R \; F_*^e R < 1 \iff \projdim_R \; F_*^e R < \infty.
\]
\end{remark}

\begin{remark} \label{AHIYimplieskunz}
We get Kunz's Theorem from \ref{AHIY} by setting $M = R$. Indeed, assume $F^e\colon R \to F_*^e R$ is flat for $e \geq 1.$ Then $F_*^e R$ is a flat, hence projective, $R$-module, so $R$ is regular by \hyperref[AHIYappl]{Remark \ref*{AHIYappl} (i)}.
\end{remark}

\subsection*{The Relative Frobenius}

We now turn our attention to the relative version of the ideas above. As the Frobenius is used to detect properties of rings, the relative Frobenius gives structural information about underlying ring homomorphisms. We begin with the construction of the relative Frobenius map. 

\begin{defn} \label{relfrobdef}
Let $\varphi\colon (R, \m, k) \to (S, \mathfrak{n}, \ell)$ be a finite map of local, commutative rings. By taking the pushout of $\varphi$ along $F$ we get the following commutative diagram
\[
\begin{aligned}
\begin{tikzcd}[column sep=2cm]
R\arrow[r,"\varphi"]\arrow[d,"F"]&S\arrow[d]\arrow[rd,"F"]& \\
F_*R\arrow[r]&A\arrow[r,"F^{\varphi}",swap]& F_*S
\end{tikzcd}
& \quad \text{where} \quad A := S \otimes_R F_*R.
\end{aligned}
\]
The map $F^\varphi$ is called the \emph{relative Frobenius} of $\varphi$, and is given by $F^\varphi(s \otimes r) = s^pr$. 
\end{defn}

\begin{remark}
With notation as above, $A$ is a local ring with maximal ideal 
\[
m_A = \sqrt{\m A + \mathfrak{n}A}
\]
and whose residue field $k_A$ has the property that $\ell \subseteq k_A \subseteq F_*(\ell)$.
\end{remark}

\begin{example} 
Consider the map $\varphi\colon R \to R[x]$. Since $R[x] \otimes_R F_*R \cong (F_*R)[x]$, the relative Frobenius of $\varphi$ is determined by the following commutative diagram
\[
\begin{tikzcd}
R\arrow[r,"\varphi"]\arrow[d,"F"]&R[x]\arrow[d, "\alpha"]\arrow[rd,"F"]& \\
F_*R\arrow[r]&(F_*R)[x]\arrow[r,"F^{\varphi}",swap]& (F_*R)[x]
\end{tikzcd}
\]
Any polynomial $\sum_{i=1}^n r_ix^i \in R[x]$ is mapped to $\sum_{i=1}^n r_i^px^i$ under $\alpha$, so the relative Frobenius is given by 
\[
F^\varphi \left( \sum_{i=1}^n r_ix^i \right) = \sum_{i=1}^n r_ix^{pi}.
\]
\end{example}

Radu and Andr\'e established a relative version of Kunz's Theorem, which characterizes regularity of a ring map in terms of the relative Frobenius. We recall the definition of a regular map before stating the theorem.  

\begin{defn} 
A map $\varphi\colon R \to S$ of local rings is \emph{regular} if $\varphi$ is flat and geometrically regular, i.e., for all $k'$ a finite, purely inseparable field extension of $k$, $S \otimes_R k'$ is regular. 
\end{defn}

\begin{theorem}[\cite{Rad92, And93}] \label{raduandre}
The relative Frobenius $F^\varphi$ of $\varphi$ is flat if and only if $\varphi$ is regular. \qed
\end{theorem}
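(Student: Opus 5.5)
The plan is to prove the two implications separately. In both directions the key step is to reduce to the closed fibre, using the local flatness criterion \emph{par fibres} together with Theorem~\ref{kunz}.

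\textbf{Regular implies flat.} Assume $\varphi$ is regular. Since $\varphi$ is flat, $A = S\otimes_R F_*R$ is flat over $F_*R$. Also $F_*S$ is flat over $F_*R$, because the composite $F_*R\to A\to F_*S$ is just $F_*\varphi$. So $F^\varphi$ is a local homomorphism between Noetherian local rings that are both flat over $F_*R$. By the fibrewise flatness criterion, $F^\varphi$ is flat as soon as its reduction modulo the maximal ideal of $F_*R$ is flat. Write $T := S/\m S$ for the closed fibre. That reduction is the map
\[
T\otimes_k F_*k \longrightarrow F_*T ,
\]
which is exactly the relative Frobenius of $k\to T$.

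This reduces the problem to the field case. Put $B := T\otimes_k F_*k$. Geometric regularity of $\varphi$ makes $B$ regular, so the Frobenius of $B$ is flat by Theorem~\ref{kunz}. That Frobenius factors as
\[
B \longrightarrow F_*T \longrightarrow F_*B = F_*T\otimes_{F_*k}F^2_*k .
\]
The second map is a base change along a field extension, hence faithfully flat. Flatness of the composite together with faithful flatness of the second factor then forces the first factor $B\to F_*T$ to be flat.

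\textbf{Flat implies regular: the fibre part.} Assume $F^\varphi$ is flat, and suppose for the moment that $\varphi$ is already known to be flat. Base changing $F^\varphi$ gives flatness of $T\otimes_k F_*k\to F_*T$. The Frobenius of $T$ factors as
\[
T\to T\otimes_k F_*k\to F_*T,
\]
where the first map is a field base change (flat) and the second is flat. So the Frobenius of $T$ is flat, and $T$ is regular by Theorem~\ref{kunz}.

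For geometric regularity, I would check that the formation of the relative Frobenius commutes with base change along a finite purely inseparable extension $k\subseteq k'$, up to a further faithfully flat field extension. That would show that the hypothesis passes to $k'\to T\otimes_k k'$, and the same Kunz argument then gives regularity of $T\otimes_k k'$.

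\textbf{Flat implies regular: flatness of $\varphi$ (main obstacle).} Deducing that $\varphi$ itself is flat from flatness of $F^\varphi$ is the step I expect to be hardest; it is the substance of André's contribution.

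My first attempt would be the following. For an $R$-module $N$, flatness of $F_*S$ over $A$ gives
\[
\Tor_i^R(S,F_*N)\otimes_A F_*S \;\cong\; \Tor_i^{F_*R}(F_*S, F_*N).
\]
The right-hand side is $F_*\Tor_i^R(S,N)$. Taking $N=k$ and iterating over $e$, this compares $\Tor_1^R(S,k)$ with $\Tor_1^R(S,F^e_*k)$. The aim is to force $\Tor_1^R(k,S)=0$, which gives flatness of $\varphi$ by the local flatness criterion, since $S$ is finite over $R$.

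If this length-counting argument does not close up, the fallback is André--Quillen homology. One shows that $D_1(S|R,\ell)$ vanishes and invokes André's characterisation of regular homomorphisms.
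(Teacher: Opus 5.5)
\textbf{Verdict: there is a genuine gap.} You never prove that $\varphi$ is flat, which the converse direction needs. The paper gives no proof of this theorem; it only cites Radu and Andr\'e.

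\textbf{What works.} Your proof that ``$\varphi$ regular $\Rightarrow F^\varphi$ flat'' is sound: the fibrewise flatness criterion, then Kunz for $T\otimes_k F_*k$, then cancelling the faithfully flat factor $F_*T\to F_*(T\otimes_kF_*k)$. The fibre half of the converse is also sound, and it does not use flatness of $\varphi$. Indeed, $T\otimes_kF_*k\to F_*T$ is just $F^\varphi\otimes_A A/\m' A$, where $\m'$ is the maximal ideal of $F_*R$. For $k\subseteq k'$, the relative Frobenius of $k'\to T\otimes_kk'$ is exactly the base change of this map along $F_*k\to F_*k'$, so no extra field extension is needed.

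\textbf{The gap.} Nothing in the proposal shows that $\varphi$ is flat. Your displayed isomorphism is not justified by flatness of $F_*S$ over $A$. What that flatness actually gives is $F_*\Tor_i^R(S,N)\cong F_*S\otimes_A\Tor_i^{F_*R}(A,F_*N)$. Here $\Tor_i^{F_*R}(A,F_*N)$ is $\Tor_i^R(S\otimes_{R,F}R,\,N)$, the Tor of the Frobenius base change of the $R$-module $S$. Replacing it by $\Tor_i^R(S,F_*N)$ needs $S\otimes^{\L}_RF_*R\simeq A$, i.e.\ $\Tor^R_{>0}(S,F_*R)=0$, and you do not have that a priori.

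Even granting the formula, it cannot work. In the finite setting your fibre argument forces $S/\m S$ to be a field. Then for $N=k$ both sides have the same length, so counting lengths (or iterating in $e$) cannot force $\Tor_1^R(S,k)=0$. The Andr\'e--Quillen fallback is only a pointer to Andr\'e's theorem. The whole content is why $D_1(S|R,\ell)$ vanishes (Andr\'e uses that Frobenius acts by zero on Andr\'e--Quillen homology), and you give no argument for it.

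\textbf{How to close it in the finite setting.} Push your fibre argument further.
\begin{enumerate}
\item Since $T=S/\m S$ is Artinian, regularity means $T=\ell$.
\item The Frobenius of $B=\ell\otimes_kF_*k$ factors as $B\to F_*\ell\to F_*B$ with both maps flat, so $B$ is regular. It is local Artinian, since $F_*k$ is purely inseparable over $k$, so $B$ is a field.
\item Then $B\to F_*\ell$ is an injective $F_*k$-linear map between spaces of dimension $[\ell:k]$, hence an isomorphism.
\item So $\m'A$ is the maximal ideal of $A$, and the closed fibre of the finite flat local map $F^\varphi$ is the residue field of $A$. Hence $F_*S$ is free of rank one over $A$ on the generator $1$, i.e.\ $F^\varphi$ is an isomorphism.
\item $F^\varphi$ is $F_*R$-linear. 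Identifying $F_*R$ with $R$, this says $S\otimes_{R,F}R\cong S$ as $R$-modules.
\item Let $X$ be a minimal presentation matrix of $S$ over $R$. Then $X^{[p]}$ (entrywise $p$th powers) presents $S\otimes_{R,F}R$ on the same number of generators. Comparing Fitting ideals gives $I_1(X)=I_1(X)^{[p]}\subseteq\m\, I_1(X)$, so $I_1(X)=0$ by Nakayama, and $S$ is free over $R$.
\item Now $\varphi$ is flat, and $\ell\otimes_kk'$ is regular for every finite purely inseparable $k'$ by your base-change argument. So $\varphi$ is regular.
\end{enumerate}
For non-finite $\varphi$ this trick is unavailable, and one genuinely needs Andr\'e's argument.
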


\begin{remark} \label{raduandreimplieskunz}
We obtain Kunz's Theorem from \ref{raduandre} by considering the relative Frobenius of $\varphi\colon \mathbb{F}_p \to R$ along $F\colon \mathbb{F}_p  \to F_*(\mathbb{F}_p)$, the Frobenius on $\mathbb{F}_p$. The Frobenius in this case is the identity on $\mathbb{F}_p$ since $x^p=x$ for all $x \in \mathbb{F}_p$. Moreover, $R \otimes_{\mathbb{F}_p} F_*(\mathbb{F}_p) \cong R$. Hence we have the following commutative diagram
\[
\begin{tikzcd}
\mathbb{F}_p\arrow[r,"\varphi"]\arrow[d,"F=\text{id}_{\mathbb{F}_p}"]&R\arrow[d]\arrow[rd,"F"]& \\
\mathbb{F}_p\arrow[r]& R \arrow[r,"F^{\varphi}",swap]& F_*R
\end{tikzcd}
\]
The map $\varphi$ is flat since its source is a field, and since $\mathbb{F}_p$ is perfect, $R$ is geometrically regular if $R$ is regular. Hence, $R$ is regular if and only if $\varphi$ is regular. This occurs if and only if $F^\varphi$ is flat, by Radu-Andr\'e (\ref{raduandre}). Here $F^\varphi$ is the Frobenius on $R$. So $R$ is regular if and only if $F\colon R \to F_*R$ is flat, which is the statement of Kunz's Theorem (\ref{kunz}). 
\end{remark}

Next, we draw a connection between the context of \hyperref[raduandre]{Theorem \ref*{raduandre}} and curvature, motivating the following theorem.

\begin{remark}\label{curvzero}
Suppose $\varphi\colon R \to S$ is a finite map of local rings. Then 
\[
F^\varphi\colon A \to F_*S \quad \text{flat} \quad \implies \quad \curv_A (F_*^\varphi S) =0.
\]
Indeed, $F^\varphi\colon A \to F_*S$ is finite since $\varphi$ is finite. So, $F^\varphi$ is flat if and only if $F_*^\varphi S$ is a free $A$-module. Hence, $F^\varphi$ is flat if and only if $\beta_n^A (F_*^\varphi S) =0$ for all $n \geq 1$, in which case $\curv_A(F_*^\varphi S) =0$. 
\end{remark}

Under appropriate hypotheses, McDonald obtains a partial generalization of the Radu-Andr\'e Theorem for modules 

\begin{theorem}\cite[Theorem 3.1]{McD25} \label{mcd}
Let $\varphi\colon R \to S$ be a flat map of $F$-finite local rings of positive characteristic. Then
\[
\curv_{S/\m S} \left(F_*(S/\m S)\right) = \curv_A(F_*^\varphi S) 
\]
provided the residue field $k$ of $R$ is perfect. \qed
\end{theorem}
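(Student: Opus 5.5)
This is McDonald's result; the approach is to compare minimal free resolutions over $A$ and over the closed fiber by base change along the flat map $F_*R \to A$.

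\emph{Flatness.} Set $\bar S = S/\m S$. Since $\varphi$ is flat, the map $F_*R \to A = S \otimes_R F_*R$ is flat. Its closed fiber is
\[
A/\m_{F_*R}A \cong S \otimes_R F_*R/(F_*\m)(F_*R) \cong S \otimes_R F_*k.
\]

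\emph{Perfectness of $k$.} Because $k$ is perfect, $F\colon k \to F_*k$ is an isomorphism. Hence $S \otimes_R F_*k \cong S \otimes_R k = \bar S$, with $S$ acting through the Frobenius twist. The residue field of $A$ is then $\ell$, up to a Frobenius twist.

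\emph{Base change to the fiber.} The reduction of the relative Frobenius modulo $\m$ should be the Frobenius of $\bar S$:
\[
F_*^\varphi S \otimes_{F_*R} F_*k \cong F_*(S \otimes_R k) = F_*\bar S,
\]
viewed as a module over $A \otimes_{F_*R} F_*k \cong \bar S$ via $F$. More precisely, $F_*^\varphi S \otimes_A \bar S \cong F_*S \otimes_{F_*R} F_*k \cong F_*(S/\m S)$, so the reduction is exactly $F_*\bar S$ with its Frobenius structure over $\bar S$.

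\emph{Comparing Betti numbers.} I would show $\beta_n^A(F_*^\varphi S) = \beta_n^{\bar S}(F_*\bar S)$ for all $n$.
\begin{enumerate}[label=(\arabic*)]
\item Let $P \res F_*^\varphi S$ be a minimal $A$-free resolution, which is finite in each degree since $F^\varphi$ is finite.
\item Apply $-\otimes_A \bar S = - \otimes_{F_*R} F_*k$. This yields a complex of free $\bar S$-modules that is still minimal, because $\m_A$ maps onto $\m_{\bar S}$.
\item Its homology is $\Tor^{F_*R}_*(F_*^\varphi S, F_*k)$. Since $F_*^\varphi S = F_*S$ and $F_*S$ is flat over $F_*R$ (Frobenius pushforward of the flat map $\varphi$), these Tor groups vanish in positive degrees.
\item So the reduced complex is a minimal free resolution of $F_*\bar S$ over $\bar S$, with the same ranks as $P$.
\end{enumerate}
Equal Betti numbers give equal curvature, which is the claimed equality $\curv_{S/\m S}(F_*(S/\m S)) = \curv_A(F_*^\varphi S)$.

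\emph{Main obstacle.} The delicate step is identifying $A \otimes_{F_*R} F_*k$ with $\bar S$ in a way compatible with the module structures. This is where perfectness of $k$ enters: without it, the fiber is $\bar S \otimes_k F_*k$, a possibly non-reduced ring. In that case the Betti numbers differ only by a factor from the flat extension $k \to F_*k$, and the plan would be to show curvature is unaffected.

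A secondary point is the identification $\Tor^A_n(F_*^\varphi S, \ell) \cong \Tor^{\bar S}_n(F_*\bar S, \ell)$, which rests on the minimality of the reduced resolution in step (2).
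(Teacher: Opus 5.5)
Your proposal is correct and is essentially the paper's argument: you base change along the flat map $F_*R\to A$ as in Lemma \ref{gencurvcx}, identify the reduction with the Frobenius pushforward on $S/\m S$ as in Lemma \ref{frobonS} with $N=S$, and use flatness of $\varphi$ so that the fiber of $S$ is just $S/\m S$. The one difference is that you read off the equality directly from $\beta_n^A(F^\varphi_*S)=\beta_n^{S/\m S}(F_*(S/\m S))$, whereas the paper passes through $\curv_{S/\m S}(\ell)$ via Theorem \ref{relfrobcurvcx} and the extremality in Theorem \ref{AHIY}; your shortcut avoids that detour and also gives equality of complexities.
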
 

\begin{remark}
McDonald proved a more general statement in \cite[Theorem 3.1]{McD25} with the hypothesis that $\varphi$ has finite flat dimension instead of $\varphi$ being a flat map, and without the assumption that $k$ is perfect. In this generality, he proved
\[
\curv_{\bar{S}'} (F_*\bar{S'}) \leq \curv_A(F_*^\varphi S) \leq \max\{\curv_{\bar{S'}} (F_*\bar{S'}), 1 \}
\]
where $\bar{S'} = S \otimes_R^\L k'$ for $k'$ any finite, purely inseparable extension of $k$. Since we assume $\varphi$ is flat, the derived fiber is the usual fiber: $\bar{S'} \cong S \otimes_R k'$ for all $k'$. In particular, for $k'=k$, one has
\[
\curv_{S/ \m S} (F_*(S/\m S)) \leq \curv_A(F_*^\varphi S) \leq \max\{\curv_{S/ \m S} (F_*(S/\m S)), 1 \}.
\]
The equality 
\[
\curv_{S/\m S} \left(F_*(S/\m S)\right) = \curv_A(F_*^\varphi S)
\]
holds if $\curv_{S/\m S} (F_*(S/\m S)) \geq 1$. Otherwise, $\curv_{S/\m S} (F_*(S/ \m S)) <1$, that is, $\curv_{S/\m S} (F_*(S/ \m S)) = 0$. By \hyperref[curvcxgrowth]{Remark \ref*{curvcxgrowth} (i)}, this implies 
\[
\projdim_{S/\m S}(F_*(S/\m S)) < \infty.
\]
Then $S / \m S$ is regular by \hyperref[AHIYappl]{Remark \ref{AHIYappl} (i)}, and since $k$ is perfect by assumption, $S/ \m S$ is therefore regular as a $k$-algebra. Hence, the map $\varphi\colon R \to S$ is regular, so by \ref{raduandre}, $F^\varphi$ is flat. That is, $F_*^\varphi S$ is flat over $A$, so \ref{curvzero} implies $\curv_A \; F_*^\varphi S = 0$. Again, we have $\curv_{S/\m S} \left(F_*(S/\m S)\right) = \curv_A(F_*^\varphi S)$. 
\end{remark}

\begin{remark}
Assuming $\varphi\colon R \to S$ is flat and finite, McDonald's result implies \hyperref[raduandre]{Theorem \ref*{raduandre}}. If $F^\varphi$ is flat, $\curv_A (F_*^\varphi S) =0$ by \hyperref[curvzero]{Remark \ref*{curvzero}}. Then by \hyperref[mcd]{Theorem \ref*{mcd}} we have $\curv_{S/\m S} \left(F_*(S/\m S)\right)=0$. This implies $S/\m S$ is regular, by \hyperref[AHIYappl]{Remark \ref*{AHIYappl} (i)}. Since $k$ is perfect, $S \otimes_R k =S/\m S$, so $\varphi$ is geometrically regular over $k$. 
\end{remark}

As a consequence of McDonald's theorem, we can use the relative Frobenius to detect when the underlying ring map is a complete intersection. We give a simplified definition of a complete intersection map for our specific context and then make this connection explicit.

\begin{defn} \cite[\S 1]{Avr99} A flat map $\varphi\colon R \to S$ of local rings is a \emph{complete intersection} if $S/\m S$ is a complete intersection.
\end{defn}

\begin{remark}
Another consequence of McDonald's result is that, under the hypotheses of \ref{mcd}, one has
\[
\cx_A (F_*^\varphi S) < \infty \quad \implies \varphi \quad \text{is a complete intersection}
\]
Indeed, $\cx_A (F_*^\varphi S) < \infty \implies \curv_A (F_*^\varphi S) \leq 1$ by \hyperref[curvcxgrowth]{Remark \ref*{curvcxgrowth} (ii)}. Applying \ref{mcd} then gives $\curv_{S/\m S} (F_*(S/\m S)) \leq 1$. By \ref{AHIY} this implies $\curv_{S/\m S}(\ell) < 1 $, where $\ell$ is the residue field of $S$. Finally, \hyperref[regci]{Remark \ref*{regci} (iii)} gives that $S/\m S$ is a complete intersection. Since $\varphi$ is flat by assumption, it follows that $\varphi$ is a complete intersection.
\end{remark}

In the next section, we extend McDonald's result, \ref{mcd}, to coefficients.

\section{Betti and Bass Numbers over the Relative Frobenius}

In this section we present the main results of the paper, extending McDonald's theorem to coefficients and establishing an analogue for Bass numbers. Specifically, given a map $\varphi\colon R \to S$ of local rings and a homologically finite complex of $S$-modules, we obtain bounds on the curvature, complexity, and their injective analogues of its relative Frobenius pushforward in terms of the corresponding invariants of the residue field of the closed fiber of $\varphi$. We show how this recovers \hyperref[mcd]{Theorem \ref*{mcd}} due to McDonald and, under certain conditions, \hyperref[AHIY]{Theorem \ref*{AHIY}} due to Avramov, Hochster, Iyengar and Yao. We start by establishing the notation and assumptions for our main result, state the result, and then illustrate how it can be used to derive properties of $\varphi$.

Let $(R, \m, k)$ be a Noetherian, local, commutative ring of prime characteristic $p >0$. For simplicity of exposition, we suppose that $k$ is perfect. The diagram below defines the relative Frobenius $F^{e \cdot \varphi}$ of $\varphi\colon R \to S$ along the $e$-fold Frobenius $F^e$ of $R$. This notion was introduced for $e=1$ in \hyperref[relfrobdef]{Definition \ref*{relfrobdef}}.
\[
\begin{aligned}
\begin{tikzcd}[column sep=2cm]
R\arrow[r,"\varphi"]\arrow[d,"F^e"]&S\arrow[d]\arrow[rd,"F^e"]& \\
F^e_*R\arrow[r, "\psi"]&A_e\arrow[r,"F^{e \cdot \varphi}",swap]& F^e_*S
\end{tikzcd}
& \quad \text{where} \quad A_e := S \otimes_R F^e_*R.
\end{aligned}
\]
The map $F^{e \cdot \varphi}$ is given by $F^{e \cdot \varphi}(s \otimes r) = s^{p^e}r$.

In this section, we examine the growth of Betti and Bass numbers of $N \in \DposfS$ viewed as a complex over $A_e$ via the action of the relative Frobenius $F^{e \cdot \varphi}$. We assume the map $\varphi$ is flat, which ensures that $\psi$ is also flat. The main result is stated next; parts (i) and (ii) are proved in \hyperref[relfrobcurvcx]{Theorem \ref*{relfrobcurvcx}} and parts (iii) and (iv) are proved in \hyperref[injcurvcx]{Theorem \ref*{injcurvcx}}.

\begin{theorem} \label{jenna}
Suppose $\varphi\colon R \to S$ is a finite, flat map of local rings and $N \in \DbfS$ with $N \not \simeq 0$. With $\ell$ the residue field of $S$, there are inequalities for all $e \geq 1$:
\begin{enumerate}[label=(\roman*)]
\item $\curv_A ((F^{e \cdot \varphi})_* N) \geq \curv_{S/\m S} (\ell)$
\item $\cx_A((F^{e \cdot \varphi})_* N) \geq \cx_{S/\m S}(\ell)$
\item $\injcurv_A ((F^{e \cdot \varphi})_* N) \geq \curv_{S/\m S} (\ell)$
\item $\injcx_A((F^{e \cdot \varphi})_* N) \geq \cx_{S/\m S}(\ell)$
\end{enumerate}
Equality in (i) and (ii) holds, for example, when $N$ has finite flat dimension over $R$. We have equality in (iii) and (iv) if $N$ has finite injective dimension over $R$. \qed
\end{theorem}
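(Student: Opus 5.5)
Write $A=A_e$ and $\overline{A}:=A\otimes_{F^e_*R} F^e_*k \cong S\otimes_R F^e_*k$. Since $k$ is perfect, $F^e_*k\cong k$ as a field, so $\overline{A}\cong S/\m S$ as a ring. My plan is to reduce all four inequalities to Remark \ref{AHIYgeq}, applied over the closed fiber $\overline{A}$, by means of a base change along the flat map $\psi\colon F^e_*R\to A$.

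\textbf{Step 1: reduction to the fiber.} Set $M:=(F^{e\cdot\varphi})_*N$. Let $x$ be the ideal $\m$ viewed inside $F^e_*R$, and let $K$ be the Koszul complex over $F^e_*R$ on a finite generating set of $x$. Since $\psi$ is flat, $K\otimes_{F^e_*R} A$ is the Koszul complex over $A$ on the image of this generating set. Because $\m A$ is a proper ideal, Koszul-complex arguments of the kind in \cite{AIM06} give
\[
\curv_A (K\otimes_{F^e_*R}M)\le \curv_A M ,\qquad \cx_A(K\otimes_{F^e_*R}M)\le \cx_A M .
\]
I would also compare the Betti numbers of $M\otimes^{\L}_{F^e_*R}k$ over $\overline{A}$ with those of $M$ over $A$. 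By flatness of $\psi$ and the change-of-rings isomorphism,
\[
\Tor^A(\ell_A, M)\cong \Tor^{\overline{A}}\bigl(\ell_A,\ \overline{A}\otimes^{\L}_A M\bigr),
\]
and the second argument is $k\otimes^{\L}_{F^e_*R}M$. This gives $\beta^A_n(M)=\beta^{\overline A}_n(k\otimes^{\L}_{F^e_*R} M)$, which computes the curvature and complexity of $M$ over $A$ through the fiber.

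\textbf{Step 2: identifying the fiber object.} Next I need to identify $k\otimes^{\L}_{F^e_*R}M$ as an $\overline{A}$-complex. Since $F^e_*R$ acts on $N$ through $F^e\colon R\to F^e_*R$ composed with $\varphi$, we have $F^e_*R$-linearly
\[
k\otimes^{\L}_{F^e_*R}M\cong F^e_*\bigl(k\otimes^{\L}_R N\bigr),
\]
with $\overline{A}\cong S/\m S$ acting through the Frobenius of $S/\m S$. Thus this complex is $F^e_*$ of the complex $\overline N:=k\otimes^{\L}_R N$ over $S/\m S$, or at least is built from it. The complex $\overline N$ lies in $\DposfSm$ because $\varphi$ is finite, and it is nonzero by Nakayama, since $N\not\simeq 0$ and $\m S\subseteq \mathfrak n$. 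Remark \ref{AHIYgeq}(i) then gives $\curv_{S/\m S}F^e_*\overline N\ge \curv_{S/\m S}\ell$, together with the analogous statement for complexity, and this yields (i) and (ii).

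\textbf{Step 3: Bass numbers.} For (iii) and (iv) I would argue dually, using $\RHom_{F^e_*R}(k,M)\cong F^e_*\RHom_R(k,N)$ and the isomorphism $\Ext_A(\ell_A,M)\cong\Ext_{\overline A}(\ell_A,\RHom_{F^e_*R}(k,M))$. The complex $\RHom_R(k,N)$ lies in $\DnegSm$ and is nonzero, again by a Nakayama-type argument, so Remark \ref{AHIYgeq}(ii) applies.

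\textbf{Equality cases.} If $N$ has finite flat dimension over $R$, then $\overline N$ is homologically bounded, hence lies in $\DbfSm$, and Remark \ref{AHIYgeq}(iii) gives equality. The injective case is dual: if $N$ has finite injective dimension over $R$, then $\RHom_R(k,N)$ is bounded.

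\textbf{Main obstacle.} The step I expect to be hardest is the change-of-rings identification in Step 1. The isomorphism $\Tor^A(\ell_A,M)\cong\Tor^{\overline A}(\ell_A,\overline A\otimes^{\L}_A M)$ holds because $\ell_A$ is an $\overline A$-module, but one must check that $\overline A\otimes^{\L}_A M$ coincides with $k\otimes^{\L}_{F^e_*R}M$. This uses $\overline A\simeq A\otimes^{\L}_{F^e_*R}k$, which follows from flatness of $\psi$. A second point to verify is that the residue field of $A$ is handled correctly: $\ell_A$ may be a purely inseparable extension of $\ell$, and a field extension only rescales ranks, so it does not change curvature or complexity.
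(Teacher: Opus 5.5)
Your proof is correct. For (i) and (ii) it follows the paper's argument:
\begin{itemize}
\item your identity $\beta^A_n(M)=\beta^{\overline A}_n(k\otimes^{\L}_{F^e_*R}M)$ is Lemma~\ref{gencurvcx}, which you obtain from derived associativity $\ell_A\otimes^{\L}_A M\simeq \ell_A\otimes^{\L}_{\overline A}(\overline A\otimes^{\L}_A M)$ rather than from a minimal resolution;
\item your Step 2 is Lemma~\ref{frobonS};
\item Remark~\ref{AHIYgeq} then finishes.
\end{itemize}
You also check $k\otimes^{\L}_R N\not\simeq 0$, which Remark~\ref{AHIYgeq} requires and the paper leaves implicit. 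The Koszul-complex inequality at the start of Step 1 is never used and can be deleted.

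For (iii) and (iv) your route is genuinely different. The paper assumes $A$ has a normalized dualizing complex. It proves $((F^{e\cdot\varphi})_*N)^\dagger\simeq(F^{e\cdot\varphi})_*(N^\dagger)$ for the finite map $F^{e\cdot\varphi}$, converts Bass numbers to Betti numbers via $\mu^n_A(X)=\beta^A_n(X^\dagger)$, and applies the Betti result to $N^\dagger$. You instead dualize the base change directly, via $\RHom_A(\ell_A,M)\simeq\RHom_{\overline A}(\ell_A,\RHom_{F^e_*R}(k,M))$ (using flatness of $\psi$) and the Frobenius-twisted identification $\RHom_{F^e_*R}(k,M)\simeq F^e_*\RHom_R(k,N)$, and then cite Remark~\ref{AHIYgeq}(ii). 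Each route buys something:
\begin{itemize}
\item Yours needs no dualizing complex, and the equality case is transparent: $\RHom_R(k,N)$ is bounded exactly when $\mathrm{injdim}_R N<\infty$. The paper instead has to pass through the equivalence $\flatdim_R N^\dagger<\infty\iff\mathrm{injdim}_R N<\infty$.
\item The paper's route gets (iii) and (iv) from (i) and (ii) in one line, and gets non-vanishing for free from $N^{\dagger\dagger}\simeq N$.
\end{itemize}

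On non-vanishing, tighten your Step 3. The claim $\RHom_R(k,N)\not\simeq 0$ is not Nakayama in the top degree: $\Hom_S(S/\m S,H_s(N))$, with $s$ the top nonvanishing degree, can be zero. For example, take $R=S=N=k[[t]]$; then $\Hom_R(k,R)=0$, while $\RHom_R(k,R)\neq 0$. What you need is finiteness of depth. One way: let $K$ be the Koszul complex on generators of $\m$.
\begin{itemize}
\item Nakayama in the lowest degree gives $K\otimes_R N\not\simeq 0$; this uses that $N$ has finitely generated homology over $R$, since $\varphi$ is finite.
\item $K\otimes_R N$ is a shift of $\RHom_R(K,N)$.
\item $K$ is finitely built from $k$.
\end{itemize}
So $\RHom_R(k,N)\simeq 0$ would force $\RHom_R(K,N)\simeq 0$, hence $K\otimes_R N\simeq 0$, a contradiction.
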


\begin{remark} \label{conditions}
In the statement of the preceding theorem, the hypotheses that the residue field $k$ of $R$ is perfect and the map $\varphi$ is flat (rather than of finite flat dimension) are both imposed for ease of exposition. Indeed, tackling the case of a general $k$ entails only an extension of scalars to $F_*k$ in various steps of the proof. If $\varphi$ is only assumed to be of finite flat dimension, the terms on the right in parts (i)--(iv) would involve invariants of the derived fiber $S\otimes^\L_Rk$, viewed either as a simplicial, or a differential graded, algebra. The basic ingredients needed to execute the proof in this generality are already available in \cite{McD25} .
\end{remark}

\begin{cor}
With notation as in \hyperref[jenna]{Theorem \ref*{jenna}}, if there exists some $e \geq 1$ such that either $\cx_A((F^{e \cdot \varphi})_* N) < \infty$ or $\curv_A((F^{e \cdot \varphi})_* N) \leq 1$, then $\varphi$ is a complete intersection. The same statements hold when replacing complexity and curvature with their injective analogues.
\end{cor}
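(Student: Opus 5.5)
Since $\cx \geq$ and $\curv \geq$ statements are given by Theorem \ref{jenna}, the plan is to combine them with the characterization of complete intersections via the residue field. Fix $e \geq 1$ with $\cx_A((F^{e\cdot\varphi})_*N) < \infty$ or $\curv_A((F^{e\cdot\varphi})_*N) \leq 1$. First I reduce both hypotheses to a curvature bound. If the complexity is finite, then the Betti numbers $\beta_n^A((F^{e\cdot\varphi})_*N)$ are bounded by $cn^{d-1}$ for $n \gg 0$. Hence their $n$-th roots have $\limsup$ at most $1$, so $\curv_A((F^{e\cdot\varphi})_*N) \leq 1$. This is the elementary implication behind Remark \ref{curvcxgrowth}(ii), and it holds verbatim for complexes, since it only uses the definition of curvature as a $\limsup$ of $n$-th roots. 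Alternatively, one can work with complexity throughout and use Theorem \ref{jenna}(ii) directly.

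Next I apply Theorem \ref{jenna}(i). It gives
\[
\curv_{S/\m S}(\ell) \leq \curv_A((F^{e\cdot\varphi})_*N) \leq 1,
\]
or, in the complexity case via (ii), $\cx_{S/\m S}(\ell) \leq \cx_A((F^{e\cdot\varphi})_*N) < \infty$. Both lead to the same conclusion. By Remark \ref{regci}(iii), $\curv_{S/\m S}(\ell) \leq 1$ means exactly that the closed fiber $S/\m S$, a Noetherian local ring with residue field $\ell$, is a complete intersection. In the complexity route, Remark \ref{curvcxgrowth}(iii) first converts $\cx_{S/\m S}(\ell) < \infty$ into $\curv_{S/\m S}(\ell) \leq 1$. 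Since $\varphi$ is flat by hypothesis, the definition of a complete intersection map (\cite[\S 1]{Avr99}) shows that $\varphi$ is a complete intersection.

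For the injective analogues I run the same argument with Theorem \ref{jenna}(iii) and (iv) in place of (i) and (ii). The passage from finite injective complexity to injective curvature at most $1$ is again the elementary root estimate applied to the Bass numbers $\mu_A^n$. The right-hand sides in (iii) and (iv) are still $\curv_{S/\m S}(\ell)$ and $\cx_{S/\m S}(\ell)$, so the rest of the argument is unchanged.

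I do not expect a serious obstacle, since all the depth is in Theorem \ref{jenna}. The only points to check are these:
\begin{enumerate}[label=(\arabic*)]
\item The implication ``finite complexity implies curvature $\leq 1$'' needs no module hypothesis. It follows directly from the definitions, not from Remark \ref{curvcxgrowth}, which is stated for modules.
\item $S/\m S$ is Noetherian local, so Remark \ref{regci} applies to it.
\end{enumerate}
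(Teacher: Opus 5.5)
Your proposal is correct and follows essentially the same route as the paper: reduce finite complexity to curvature at most $1$, then apply Theorem~\ref{jenna}(i) (or (iii) in the injective case) to get $\curv_{S/\m S}(\ell)\le 1$, and conclude via Remark~\ref{regci}(iii) and flatness of $\varphi$. You note that ``finite complexity implies curvature $\le 1$'' follows directly from the definitions. That is a small gain in rigor, since the paper cites Remark~\ref{curvcxgrowth}(ii), which is stated only for modules, whereas $(F^{e\cdot\varphi})_*N$ is a complex.
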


\begin{proof}
Suppose $\curv_A((F^{e \cdot \varphi})_* N) \leq 1$ for some $e \geq 1$. \hyperref[jenna]{Theorem \ref*{jenna}} implies that $\curv_{S/\m S}(\ell) \leq 1$ and $S/\m S$ is then a complete intersection by \hyperref[regci]{Remark \ref*{regci} (iii)}. Since $\cx_A ((F^{e \cdot \varphi})_* N) < \infty$ implies $\curv_A ((F^{e \cdot \varphi} )_*N) \leq 1$ (see \hyperref[curvcxgrowth]{Remark \ref*{curvcxgrowth} (ii)}) the proof is complete. The injective analogue is proved similarly. 
\end{proof}

\subsection*{Growth of Betti Numbers}

This section culminates in the proofs of parts (i) and (ii) of \hyperref[jenna]{Theorem \ref*{jenna}}, establishing the bounds on curvature and complexity for complexes under the relative Frobenius action. Before presenting the proof, we first examine the result from the perspective of extremality, then use it to recover results outlined in Section 3.

\begin{remark} In view of \hyperref[AHIY]{Theorem \ref*{AHIY}}, it is natural to ask if $F_*^\varphi N$ is extremal and injectively extremal for any $N \in \DbfS$. Observe that \hyperref[jenna]{Theorem \ref*{jenna}}, together with \ref{regci} and \ref{injineq}, give the following inequalities:
\begin{enumerate}[label=(\roman*)]
\item $\curv_A(k_A) \geq \curv_A(F^\varphi_*N) \geq \curv_{S/\m S} (\ell)$ 
\item $\cx_A(k_A) \geq \cx_A(F^\varphi_*N) \geq \cx_{S/\m S} (\ell)$
\item $\curv_A(k_A) \geq \injcurv_A(F^\varphi_*N) \geq \curv_{S/\m S} (\ell)$
\item $\cx_A(k_A) \geq \injcx_A(F^\varphi_*N) \geq \cx_{S/\m S} (\ell)$
\end{enumerate}
where $k_A$ is the residue field of $A$. However, the left-hand inequalities above can be strict, and we do not expect equality since asymptotic behavior of $F_*^\varphi N$ is a relative property of the map $\varphi$, not an absolute property of the ring $A$. For instance, take $\varphi\colon R \to R$ as the identity on $R$. Then $F^\varphi$ is also the identity on $R$. In this case $\curv_R(F_*^\varphi R) =0$, but $\curv_R(k)$ can be arbitrary.
\end{remark}

\begin{remark}
We recover McDonald's result \ref{mcd} from \ref{jenna} and \ref{AHIY} by setting $N = S$. For one has
\[
\curv_A(F_*^\varphi S) = \curv_{S/\m S} (\ell) =  \curv_{S/\m S} (F_* (S/\m S)).
\]
The first equality follows from \ref{jenna} since $S$ is flat over $R$ and the second equality holds since the $S/\m S$-module $F_*(S/\m S)$ is extremal.
\end{remark}

\begin{remark}
In the case where $R$ is finite over $k$, \hyperref[jenna]{Theorem \ref*{jenna}} along with \ref{regci} and \ref{injineq} recovers \hyperref[AHIY]{Theorem \ref*{AHIY}} due to Avramov, Hochster, Iyengar and Yao. The relative Frobenius of $\varphi\colon k \to R$ along $F\colon k \to k$, the Frobenius on $k$, is given by the following commutative diagram
\[
\begin{tikzcd}
k \arrow[r,"\varphi"]\arrow[d,"F" ', "\cong"]&R\arrow[d]\arrow[rd,"F"]& \\
k \arrow[r]& R \arrow[r,"F^{\varphi}",swap]& F_*R
\end{tikzcd}
\]
Since $k$ is perfect, the Frobenius on $k$ is an isomorphism. Hence, $F^\varphi$ is the Frobenius on $R$. For $M \in \Dbf$, one has
\[
\curv_R \; k \geq \curv_R (F_*M) = \curv_R (F_*^\varphi M) \geq \curv_R \; k
\]
and 
\[
\cx_R \; k \geq \cx_R (F_*M) = \cx_R (F_*^\varphi M) \geq \cx_R\; k
\]
showing that $F_*M$ is extremal. Injective extremality of $F_*M$ is shown by the inequalities
\[
\curv_R \; k \geq \injcurv_R (F_*M) = \injcurv_R (F_*^\varphi M) \geq \curv_R \; k
\]
and 
\[
\cx_R \; k \geq \injcx_R (F_*M) = \injcx_R (F_*^\varphi M) \geq \cx_R \; k.
\]
\end{remark}

Next, we establish a general result regarding Betti numbers and extension of scalars, which we will later apply in the context of \hyperref[jenna]{Theorem \ref{jenna}}. We begin by introducing the notation for this result.

\begin{notation} \label{derivedfibers}
For any map $\alpha\colon A \to B$ of commutative rings, there is an extension of scalars functor
\[ 
\ModA \to \ModB \quad \text{given by} \quad M \mapsto B \otimes_A M. 
\]
We let $\alpha^*$ denote the associated functor in the derived setting:
\[
\alpha^* \colon \DposA \to \DposB  \quad \text{given by} \quad \alpha^*M := B \otimes_A^\L M.
\]
Thus for any $M \in \DposA$, the complex $ \alpha^*M$ represents the complex $B \otimes_A G$ where $G \res M$ is a projective resolution of $M$ over $A$. There is also the restriction of scalars functor $\alpha_*(-)$. This is an exact functor, so it extends to the derived setting:
\[
\alpha_* \colon \DposB \to \DposA.
\]
\end{notation}

Let $\psi\colon R \to A$  be a flat map of local rings and let $k$ be the residue field of $R$. Base change along $R \to k$ gives the following commutative diagram
\begin{equation} \label{basechange}
\begin{aligned}
\begin{tikzcd}
R\arrow[r, "\psi"]\arrow[d]&A\arrow[d, "\alpha"] \\
k\arrow[r, "\overline{\psi}"]&B 
\end{tikzcd}
&\quad \text{where} \quad B:= A/\m A
\end{aligned}
\end{equation}

Below is the first ingredient of the proof of \hyperref[jenna]{Theorem \ref*{jenna}}.

\begin{lemma} \label{gencurvcx}
For $N \in \DposfA$, we have $\alpha^*N \in \DposfB$ and there is an equality of Betti numbers 
\[
\beta_n^A(N) = \beta_n^B(\alpha^*N) \quad \text{for all} \quad n.
\]
\end{lemma}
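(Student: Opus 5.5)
Let $l_A$ denote the residue field of $A$; since $\m A \subseteq \m_A$, it is also the residue field of $B = A/\m A$. The plan is to compute both Betti numbers as ranks of Tor over $l_A$ and to compare the two Tor modules through associativity of the derived tensor product.

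\textbf{Finiteness of $\alpha^*N$.} Choose a minimal free resolution $G \res N$ over $A$; each $G_n$ is finite free and $G_n = 0$ for $n \ll 0$. Then $\alpha^*N$ is represented by $B \otimes_A G$, a bounded-below complex of finite free $B$-modules, so its homology is finitely generated and bounded below. Thus $\alpha^*N \in \DposfB$. Minimality of $G$ means $\partial(G) \subseteq \m_A G$, so $B\otimes_A G$ is again minimal, because $\m_A B$ is the maximal ideal of $B$. This alone already gives $\beta_n^B(\alpha^*N) = \rank_B (B\otimes_A G_n) = \rank_A G_n = \beta_n^A(N)$, using the earlier remark that Betti numbers are the ranks in a minimal free resolution.

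\textbf{Derived-category version.} For a cleaner check, one can instead use the isomorphisms
\[
l_A \otimes_B^\L (B \otimes_A^\L N) \simeq (l_A \otimes_B^\L B) \otimes_A^\L N \simeq l_A \otimes_A^\L N .
\]
Taking $H_n$ gives $\Tor_n^B(l_A,\alpha^*N) \cong \Tor_n^A(l_A,N)$, and comparing ranks over $l_A$ yields the equality.

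\textbf{Role of flatness.} The hypothesis that $\psi$ is flat is not actually used for this statement. It only ensures that $\alpha^*N = B\otimes^\L_A N$ agrees with the more natural fibre $k\otimes^\L_R N$, via $B \otimes_A^\L N \simeq (k\otimes_R A)\otimes^\L_A N$ with $k\otimes_R^\L A \simeq k\otimes_R A = B$. That identification is what will be used later in the paper.

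\textbf{Expected obstacle.} The only subtle step is confirming that the residue field of $B$ coincides with that of $A$ and that minimality passes to $B\otimes_A G$. Both follow because $B$ is a quotient of $A$ by an ideal contained in $\m_A$, so this step is routine.
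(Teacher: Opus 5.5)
Your proposal is correct and follows essentially the same route as the paper: base-change a minimal free resolution $G$ of $N$ over $A$ to $B\otimes_A G$ (which the paper writes as $k\otimes_R G$), then observe that minimality survives because $\m A\subseteq \m_A$, so the ranks, and hence the Betti numbers, agree. Your remark about flatness is also accurate: flatness of $\psi$ is needed only for the identification $\alpha^*N\simeq k\otimes^\L_R N$ used later in the paper, not for the lemma itself, as your associativity argument $k_A\otimes^\L_B(B\otimes^\L_A N)\simeq k_A\otimes^\L_A N$ confirms.
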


\begin{proof}
Let $G \res N$ be a minimal free resolution of $N$ over $A$. Since $A$ is flat over $R$ and $G \res N$ is a flat resolution over $A$, we have 
\[
\alpha^*N = B \otimes_A^\L N \cong (k \otimes_R A) \otimes_A^\L N \cong k \otimes_R^\L N \simeq k \otimes_R G.
\]
Each component $G_n$ of $G$ is a direct sum of copies of $A$, say $G_n = A^{b_n}$ for each $n$. Then each component of $k \otimes_R G$ is of the form
\[
(k \otimes_R G)_n = k \otimes_R G_n = k \otimes_R A^{b^n} = \left(A/\m A\right)^{b^n} = B^{b_n}.
\]
That is, $\alpha^*N \simeq k \otimes_R G$ is a complex of finitely generated free $B$-modules. In particular, $\alpha^* N \in \DposfB$. 

For all $n$ we have $\beta_n^A(N) = \rank_A(G_n)$. Since $G \res N$ is a minimal resolution over $A$, it follows that
\[
\partial_n(\alpha^*N) \subseteq \m_A \alpha^*N \subseteq \m_{B} \alpha^*N
\]
where $\m_A$ and $\m_B$ are the maximal ideals of $A$ and $B$, respectively. Therefore, $\alpha^*N\simeq k \otimes_R G$ is a minimal resolution over $B$, whose $n^{\text{th}}$ component is of the form $G_n/\m G_n$. So one has
\[
\beta_n^{B}(\alpha^*N) = \rank_{B} \left( G_n/ \m G_n \right) = \rank_A(G_n) = \beta_n^A(N). \qedhere
\]
\end{proof}

\begin{remark} \label{scalars}
With the goal of understanding $k \otimes_R^\L F_*^\varphi N$ in the context of \hyperref[jenna]{Theorem \ref*{jenna}}, we analyze restriction and extension of scalars for a general map. To keep the exposition clear, we approach this in three stages: beginning with the general module case, then focusing on our specific map, and finally extending to the derived category.
\begin{enumerate}[label=(\roman*)]
\item Let $\sigma\colon A \to S$ and $\alpha\colon A \to B$ be maps of commutative rings. By taking the pushout of $\sigma$ along $\alpha$ we get the following commutative diagram
\[
\begin{tikzcd}
A\arrow[r, "\sigma"]\arrow[d, "\alpha"]&S\arrow[d, "\iota"] \\
B\arrow[r, "\jmap"]&B \otimes_A S
\end{tikzcd}
\]
The restriction of scalars along $\sigma$ followed by the extension of scalars along $\alpha$ is isomorphic to the extension of scalars along $\iota$ followed by the restriction of  scalars along $\jmap$. That is, if $N \in \modS$, we have an isomorphism 
\[
\alpha^*(\sigma_* N) \cong \jmap_*(\iota^*N)
\]
of $B$-modules. 
In particular, 
\[
B \otimes_A \sigma_*N \cong \jmap_*(B \otimes_A N).
\]

\item Set $B = A/\m A = k \otimes_R A$ in the context of (i). Then 
\[
B \otimes_A S = k \otimes_R S = S/\m S
\]
so combining the diagram \ref{basechange} and the pushout of $\sigma$ along $\alpha$ gives the following commutative diagram of ring maps.
\[
\begin{tikzcd} 
R \arrow[r, "\psi"]\arrow[d] & A \arrow[r, " \sigma"] \arrow[d, "\alpha"] & S \arrow[d, "\iota"] \\
k \arrow[r, "\overline{\psi}"] &B\arrow[r, "\jmap"] & S/\m S
\end{tikzcd}
\]
As in (i), we have $B \otimes_A \sigma_*N \cong \jmap_*(B \otimes_A N)$ for $N \in \modS$. Then since $B = k \otimes_R A$, we get the following isomorphism
\[
k \otimes_R \sigma_* N = (k \otimes_R A) \otimes_A \sigma_* N \cong \jmap_* \left((k \otimes_R A) \otimes_ A N \right) = \jmap_*(k \otimes_R N).
\]
\item We obtain an analogous statement to (ii) in the derived setting. Assume $A$ and $S$ are flat over $R$. Then 
\[
B = k \otimes_R^\L A \cong k \otimes_R A \quad \text{and} \quad B \otimes_A^\L S \cong k \otimes_R^\L S \cong k \otimes_R S = S/\m S.
\]
So we obtain the same diagram as in (ii), and for $N \in \DposfS$ we have 
\[
k \otimes_R^\L \sigma_*N \simeq \jmap_* ( k \otimes_R^\L N)  \quad \text{in} \quad \DB.
\]
Using the notation for derived fibers introduced in \ref{derivedfibers}, the above line can be written as
\[
\alpha^*(\sigma_*N) \simeq \jmap_* (i^*N) \quad \text{in} \quad \DB.
\]
\end{enumerate}
\end{remark}

We establish the second key ingredient for the proof of \hyperref[jenna]{Theorem \ref*{jenna}} in the following lemma, returning to the context of the relative Frobenius of $\varphi\colon R \to S$. We take  $\sigma = F^\varphi$ and $\jmap = \overline{F^\varphi}$ and obtain the following commutative diagram
\[\
\begin{tikzcd}
R\arrow[r, "\psi"]\arrow[d]&A\arrow[r,"F^{\varphi}"]\arrow[d, "\alpha"]& S\arrow[d, "\iota"] \\
k\arrow[r, "\overline{\psi}"]&B \arrow[r, "\overline{F^\varphi}"]&S/\m S
\end{tikzcd}
\]
where  $A = S \otimes_R F_*R$ and  $B= k \otimes_R A$. 

\begin{lemma} \label{frobonS}
Let $\varphi\colon R \to S$ be a finite, flat map of local rings, $N \in \DposfS $ and assume $N \not \simeq 0$. Then 
\[
\alpha^*(F_*^\varphi N) \simeq F_*^{S/ \m S} (\iota^*N) \quad \text{in} \quad \DB
\]
where $F^{S/\m S}$ is the Frobenius on $S/\m S$.
\end{lemma}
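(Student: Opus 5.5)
The plan is to reduce to Remark \ref{scalars}(iii), applied with $\sigma = F^\varphi\colon A \to F_*S$, and then identify the map $\jmap=\overline{F^\varphi}\colon B \to S/\m S$ with the Frobenius of $S/\m S$, after identifying $B$ with $S/\m S$.

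\emph{Step 1: check the flatness hypotheses of Remark \ref{scalars}(iii).} We need $A$ and $S$ flat over $R$, where $S$ carries the $R$-structure through $A$, namely through $F\colon R\to F_*R \to A \to F_*S$. The map $\psi\colon F_*R \to A = S\otimes_R F_*R$ is a base change of the flat map $\varphi$, so $A$ is flat over $F_*R\cong R$. The composite $R \to A \to F_*S$ equals $F\circ\varphi$, that is, $F_*S$ is $S$ viewed over $R$ through Frobenius. Since $\varphi$ is flat, $F_*S$ is flat over $F_*R$, and this is exactly the required flatness statement. One point needs care here: the ``$R$'' in Remark \ref{scalars} is really $F_*R$, whose maximal ideal is $\m$. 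We then read $B = k\otimes_{F_*R} A$ accordingly.

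\emph{Step 2: identify the fibers.} First, $B = A/\m A = (S\otimes_R F_*R)\otimes_{F_*R} F_*k \cong S\otimes_R F_*k$. Since $k$ is perfect, $F_*k\cong k$ as rings and as $R$-algebras up to the Frobenius twist, so $B \cong S/\m S$ with its $k$-structure twisted by Frobenius. Second, the fiber of $F_*S$ over $F_*R$ is $F_*S\otimes_{F_*R}F_*k = F_*(S/\m S)$. The induced map $\jmap\colon B\cong S/\m S \to F_*(S/\m S)$ sends $\bar s\otimes 1 \mapsto \bar s^p$, so it is the Frobenius $F^{S/\m S}$. Likewise $\iota^*N = (F_*k)\otimes^\L_{F_*R} F_*N = F_*(k\otimes^\L_R N)$, which is $k\otimes^\L_R N$ regarded over $S/\m S$. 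Flatness of $S$ over $R$ makes $k\otimes^\L_R N \simeq (S/\m S)\otimes^\L_S N$, which is the complex the statement calls $\iota^*N$.

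\emph{Step 3: conclude.} Remark \ref{scalars}(iii) now gives $\alpha^*(F^\varphi_*N)\simeq \jmap_*(\iota^*N) = F^{S/\m S}_*(\iota^*N)$ in $\DB$. To justify (iii) itself, I would take a semifree (or flat) resolution $P\res N$ over $S$. Restricted to $F_*R$, this is a complex of flat modules, because $S$ is flat over $R$. Then both sides are computed by $k\otimes_R P$ with the appropriate module structures, and the module-level isomorphism from (ii) holds termwise and naturally.

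The main obstacle is bookkeeping rather than substance. One must keep track of which copy of $R$, $F_*R$ versus $R$, is acting at each stage, and verify that the Frobenius twists cancel. This is where perfectness of $k$ enters: it makes $F_*k\cong k$, and hence makes $B$ isomorphic to $S/\m S$ rather than to $S\otimes_R F_*k$. Finiteness of $\varphi$ only ensures that everything stays in $\mathsf{D}^f_\sqsupset$. The hypothesis $N\not\simeq 0$ is not needed for the isomorphism itself.
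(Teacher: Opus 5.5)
Your proposal is correct and follows the same route as the paper: apply Remark~\ref{scalars}(iii) with $\sigma = F^\varphi$, identify $B$ with $S/\m S$, and check on elements ($s\otimes 1 \mapsto s^p$) that $\overline{F^\varphi}$ is the Frobenius of $S/\m S$. Your extra bookkeeping makes explicit what the paper's computation $k\otimes_{F_*R}(F_*R\otimes_R S)=k\otimes_R S$ uses silently: the base ring is really $F_*R$, $B\cong S\otimes_R F_*k$ becomes $S/\m S$ only because $k$ is perfect, and the flatness hypotheses of (iii) hold.
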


\begin{proof}
Applying \hyperref[scalars]{Remark \ref*{scalars} (iii)} with $\sigma = F^\varphi$ and $\jmap = \overline{F^\varphi}$ yields
\[
\alpha^*(F_*^\varphi N) \simeq \overline{F^\varphi}_* (\iota^*N) \quad \text{in} \quad \DB.
\]
We claim that the $B$-action on $\iota^*N$ is via the Frobenius on $S/\m S$. Observe that 
\[
\begin{aligned}
B = k \otimes_{R} A &= k \otimes_{F_*R} (F_*R \otimes_R S) \\
&= k \otimes_R S \\
&= S/\m S.
\end{aligned}
\]
 \
 Moreover, $\overline{F^\varphi} \colon S/\m S \to S/\m S$ is the Frobenius on $S/\m S$. We see this by analyzing the diagram
\[
\begin{tikzcd}
F_*R \otimes_R S\arrow[r, "F^\varphi"]\arrow[d]&S\arrow[d] \\
k \otimes_R S \arrow[r, "\overline{F^\varphi}"]& k \otimes_R S
\end{tikzcd}
\]
An element $1 \otimes s \in F_*R \otimes_R S$ maps to itself under base change in the left-hand vertical map. It is sent to $s^p$ under the relative Frobenius $F^\varphi$, and then to $1 \otimes s^p$ under the right-hand vertical map. So $\overline{F^\varphi}$ is given by $1 \otimes s \mapsto 1 \otimes s^p$. That is, $\overline{F^\varphi} = F^{S/\m S}$. 
\end{proof}

Having assembled all the components, we are ready to present the proof.

\begin{theorem} \label{relfrobcurvcx}
Let $\varphi\colon R \to S$ be a finite, flat map of local rings, $N \in \DposfS$ and assume $N \not \simeq 0$. Then for $\ell$ the residue field of $S$, we have
\[
\curv_A((F^{e \cdot \varphi})_* N) \geq \curv_{S/\m S} (\ell) \quad \text{and} \quad \cx_A((F^{e \cdot \varphi})_* N) \geq \cx_{S/\m S} (\ell).
\]
Equality holds, for example, when $N$ has finite flat dimension over $R$.
\end{theorem}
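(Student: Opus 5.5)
The plan is to reduce Betti numbers over $A_e$ to Betti numbers over the closed fiber $S/\m S$, and then invoke the Avramov--Hochster--Iyengar--Yao inequalities there. Fix $e\geq 1$ and work with $A=A_e$, $\psi\colon F^e_*R\to A$ and $\sigma=F^{e\cdot\varphi}$. All the results above are stated for $e=1$, but they go through verbatim for $F^e$: $\psi$ is flat because $\varphi$ is, and $F^e_*R$ has residue field $k$ since $k$ is perfect. So Lemma \ref{gencurvcx} applies to the flat map $\psi$ with $B=k\otimes_{F^e_*R}A$. First, $(F^{e\cdot\varphi})_*N$ lies in $\DposfA$, because $F^{e\cdot\varphi}$ is finite. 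This holds since $\varphi$ is finite and $R$ is $F$-finite, which makes $F^e_*S$ finite over $F^e_*R$ and hence over $A$. Lemma \ref{gencurvcx} then gives
\[
\beta_n^A\bigl((F^{e\cdot\varphi})_*N\bigr)=\beta_n^B\bigl(\alpha^*(F^{e\cdot\varphi})_*N\bigr)\quad\text{for all } n.
\]
Next, the $e$-fold version of Lemma \ref{frobonS} identifies $B$ with $S/\m S$ and gives $\alpha^*(F^{e\cdot\varphi})_*N\simeq (F^{S/\m S})^e_*(\iota^*N)$ in $\DB$. The computation $1\otimes s\mapsto 1\otimes s^{p^e}$ is the same as before. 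Consequently the curvature and complexity of $(F^{e\cdot\varphi})_*N$ over $A$ equal those of $F^e_*(k\otimes^\L_R N)$ over $S/\m S$.

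For the inequalities, set $M:=\iota^*N=k\otimes^\L_R N$. Since $N\in\DposfS$ and $S/\m S$ is flat-resolved as above, $M\in\DposfSm$. Remark \ref{AHIYgeq}(i), applied over the $F$-finite local ring $S/\m S$, then gives
\[
\curv_{S/\m S}F^e_*M\geq\curv_{S/\m S}\ell \quad\text{and}\quad \cx_{S/\m S}F^e_*M\geq \cx_{S/\m S}\ell,
\]
provided $M\not\simeq 0$. This nonvanishing is the step needing care. I would argue it by Nakayama: $N$ is a nonzero complex in $\DposfS$ and $\m S\subseteq\mathfrak n$, so $k\otimes^\L_R N\simeq (S/\m S)\otimes^\L_S N$ is nonzero. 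Concretely, take the lowest nonzero homology $H_i(N)$; then $H_i(M)=H_i(N)/\m H_i(N)\neq0$ because $H_i(N)$ is finitely generated over $S$.

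For equality, I expect this to be the main obstacle, since Remark \ref{AHIYgeq}(iii) requires $F^e_*M$ to lie in $\DbfSm$, not merely in $\DposfSm$. If $N$ has finite flat dimension over $R$, then $k\otimes^\L_R N$ has bounded homology, with $H_i=\Tor^R_i(k,N)$ vanishing for $i$ large. So $M\in\DbfSm$ is nonzero, and Remark \ref{AHIYgeq}(iii) gives the equalities $\curv_{S/\m S}F^e_*M=\curv_{S/\m S}\ell$ and $\cx_{S/\m S}F^e_*M=\cx_{S/\m S}\ell$. Transporting these back through the Betti number identity yields equality in the theorem.
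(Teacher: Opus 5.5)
Your proposal is correct and follows the paper's proof step for step. You apply Lemma~\ref{gencurvcx} to the flat map $\psi$, then use Lemma~\ref{frobonS} to identify $\alpha^*$ of the pushforward with the Frobenius pushforward of $k\otimes_R^{\L} N$ over $S/\m S$, and finish with Remark~\ref{AHIYgeq}(i), upgraded to (iii) when $\flatdim_R N<\infty$. Your Nakayama argument that $k\otimes_R^{\L} N\not\simeq 0$, together with your check that the pushforward lies in $\DposfA$, is needed to invoke these results; the paper leaves both points implicit, so these are worthwhile additions.
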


\begin{proof}
We present the proof for $F$ instead of $F^e$ for notational simplicity. The reader is invited to think of $F$ as a power of the Frobenius, in which case the same proof applies. We combine \hyperref[gencurvcx]{Lemma \ref*{gencurvcx}} applied to $\psi\colon F_*R \to A$ and \hyperref[frobonS]{Lemma \ref*{frobonS}} with $B = S/\m S$ as above to get
\[
\beta_n^A(F_*^\varphi N) = \beta_n^B( \alpha^*(F_*^\varphi N)) = \beta_n^{S/\m S} \left(F_*^{S/ \m S} (\iota^*N) \right). 
\]
Since $N \in \DposfS$, we know $\iota^* N \in \DposfSm$. Therefore, by \hyperref[AHIYgeq]{Remark \ref*{AHIYgeq} (i)}
\[
\curv_A(F_*^\varphi N) = \curv_{S/\m S} \left(F_*^{S/ \m S} (\iota^* N)\right) \geq \curv_{S/\m S} (\ell)
\]
and 
\[
\cx_A(F_*^\varphi N) = \cx_{S/\m S} \left(F_*^{S/ \m S} (\iota^* N)\right) \geq \cx_{S/\m S} (\ell).
\]
If $\iota^* N \in \DbfSm$, then equalities hold above by \hyperref[AHIYgeq]{Remark \ref*{AHIYgeq} (iii)}. We have 
\[
\begin{aligned}
\iota^* N = k \otimes_R^\L N \in \DnegSm &\iff H_n(k \otimes_R^\L N) = 0 \quad \text{for all } \quad n \gg 0 \\
&\iff \Tor_n^R (k, N) = 0 \quad \text{for all} \quad n \gg 0 \\
&\iff \flatdim_R N < \infty.
\end{aligned} 
\] 
showing the desired equality holds when $N$ has finite flat dimension over $R$. 
\end{proof}

\subsection*{Bass Numbers}

In this section we prove an analogue of \hyperref[relfrobcurvcx]{Theorem \ref*{relfrobcurvcx}} for injective curvature and injective complexity, which covers the remaining parts of the main \hyperref[jenna]{Theorem \ref*{jenna}}. First we introduce the theory of dualizing complexes and Grothendieck Duality, using \cite[\S 18.2]{CFH24} as the main reference. In what follows, we assume $(A, \m_A, k_A)$ is a commutative, local ring.

\begin{defn}\cite[Proposition 18.2.1, Definition 18.2.21, Theorem 18.2.24]{CFH24}
 A \emph{dualizing complex} $\omega_A$ for $A$ is a complex in $\DbfA$ such that $\omega_A$ has finite injective dimension over $A$ and there is an isomorphism $\RHom_A(\omega_A, \omega_A) \simeq A$ in $\DA$. The complex $\omega_A$ is \emph{normalized} if $\RHom_A(k_A, \omega_A) \simeq k_A$.
\end{defn}

See \cite[\S 12.2]{CFH24} for the construction of the derived hom functor $\RHom$ and \cite[\S 17.3]{CFH24} for an exposition on injective dimension of complexes over local rings.

\begin{remark}
Not every ring has a dualizing complex. One can read about the conditions required for the existence of dualizing complexes in \cite[\S 18.2]{CFH24}. For our purposes, we assume that $A$ has a normalized dualizing complex $\omega_A$. 
\end{remark}

The heart of our argument involves dualizing the complex $\omega_A$, which we accomplish using the functor introduced in the following proposition.

\begin{prop}\cite[Theorem 18.2.3]{CFH24}
The Grothendieck Duality functor
\[
\RHom_A(-, \omega_A)\colon \DposfA \to \DnegfA
\]
maps $M \in \DposfA$ to $\RHom_A(M, \omega_A) \in \DnegfA$, which we abbreviate as $(-)^\dagger$. That is, for any $M \in \DposfA$, we set 
\[
M^\dagger := \RHom_A(M, \omega_A) .
\]
The map $M \to M^{\dagger\dagger}$ is an isomorphism in $\DA$. \qed
\end{prop}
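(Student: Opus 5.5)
The plan is to reduce everything to two facts: $\omega_A$ can be represented by a bounded complex of injectives, and the natural biduality morphism is an isomorphism on $A$ itself. Then a ``way-out'' truncation argument passes from $A$ to perfect complexes and from there to all of $\DposfA$.

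\textbf{Setup.} Since $\omega_A \in \DbfA$ has finite injective dimension, choose a bounded complex of injective $A$-modules $I \simeq \omega_A$, with $I_j = 0$ outside an interval $[a,b]$. For $M \in \DposfA$ choose a resolution $P \res M$ by finitely generated free modules with $P_i = 0$ for $i < \inf M$. Then $M^\dagger \simeq \Hom_A(P, I)$, whose degree $n$ component is $\prod_i \Hom_A(P_i, I_{i+n})$. This component vanishes unless $\inf M \le i$ and $i + n \le b$, so it vanishes for $n > b - \inf M$. Hence $M^\dagger \in \DnegA$.

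\textbf{Locality of homology.} A key observation is that $H_n(\Hom_A(P,I))$ depends only on the finitely many $P_i$ with $i \in [a-n-1,\, b-n+1]$. Consequently, if $P_{\le m}$ denotes the brutal truncation, then $H_n(M^\dagger) \cong H_n((P_{\le m})^\dagger)$ once $m$ is large relative to $n$. To see this, use the exact triangle $P_{\le m} \to M \to C \to$, where $C$ is a complex of free modules concentrated in degrees $> m$. Then $C^\dagger = \Hom_A(C,I)$ lives in degrees $< b - m$, so it does not affect degree $n$.

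\textbf{Finite generation.} The complex $P_{\le m}$ is a bounded complex of finite free modules, so it is built from finitely many shifts of $A$ by cones. Since $A^\dagger \simeq \omega_A \in \DbfA$ and $\DbfA$ is closed under cones and shifts (long exact homology sequences together with Noetherianity), we get $(P_{\le m})^\dagger \in \DbfA$. Hence each $H_n(M^\dagger)$ is finitely generated, and $M^\dagger \in \DnegfA$.

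\textbf{Biduality.} Consider the natural morphism $\delta_M \colon M \to M^{\dagger\dagger}$, which is functorial and compatible with shifts and triangles. For $M = A$ it is the isomorphism $A \to \RHom_A(\omega_A,\omega_A)$ given in the definition of a dualizing complex. By the five lemma applied to triangles, $\delta$ is therefore an isomorphism on every perfect complex, in particular on each $P_{\le m}$.

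For general $M$, fix $n$ and apply $(-)^{\dagger\dagger}$ to the triangle $P_{\le m} \to M \to C \to$. As shown above, $C^\dagger$ is concentrated in degrees $< b-m$. Since $I$ is bounded, $C^{\dagger\dagger} = \RHom_A(C^\dagger, \omega_A)$ is computed by $\Hom_A(C^\dagger, I)$, because $I$ is a bounded complex of injectives. It is therefore concentrated in degrees $> m - b + a$. Choosing $m \gg n$, both $C$ and $C^{\dagger\dagger}$ have zero homology near degree $n$. The five lemma on homology then identifies $H_n(\delta_M)$ with $H_n(\delta_{P_{\le m}})$, which is an isomorphism. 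Hence $\delta_M$ is a quasi-isomorphism.

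\textbf{Main obstacle.} I expect the delicate step to be this last degree bookkeeping. In particular, one must justify that $\RHom_A(X,\omega_A) \simeq \Hom_A(X,I)$ for $X$ only bounded above. This holds because $I$ is a bounded complex of injectives and hence K-injective. One must also check that the amplitude shifts produced by dualizing twice are controlled uniformly by $a$ and $b$, so that a single large $m$ works for each fixed $n$.
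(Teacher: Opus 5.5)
Your proof is correct. The paper gives no argument of its own here; it simply quotes \cite[Theorem 18.2.3]{CFH24}. What you have written is therefore a self-contained replacement: a bounded injective model $I$ of $\omega_A$, a resolution $P$ by finite free modules, brutal truncation, a thick-subcategory argument from $A$ to perfect complexes, and a way-out comparison.

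The degree bookkeeping you flagged as the main obstacle does close up. $C^\dagger$ sits in degrees $\le b-m-1$ and $C^{\dagger\dagger}$ in degrees $\ge m-b+a+1$. So $m\ge n+1+b-a$ kills $H_n$ and $H_{n+1}$ of both $C$ and $C^{\dagger\dagger}$, which is exactly what the comparison of long exact sequences needs.

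One small point should be added. The paper's definition only provides \emph{some} isomorphism $\RHom_A(\omega_A,\omega_A)\simeq A$, while you use that the homothety $\chi\colon A\to\RHom_A(\omega_A,\omega_A)$ is an isomorphism. This does follow. $\Hom_{\DA}(\omega_A,\omega_A)$ is then an $A$-algebra that is free of rank one. If $e$ is a basis with $1=ce$ and $e^2=de$, then $e=e\cdot 1=cd\,e$ forces $cd=1$. Hence $1$ is itself a basis and $H(\chi)$ is bijective.

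For comparison, the biduality half can be done without truncating. With your $P$ and $I$, the chain map $\delta_P$ factors as
\[
P\cong P\otimes_A A\xrightarrow{\;P\otimes\chi\;}P\otimes_A\Hom_A(I,I)\xrightarrow{\;\theta\;}\Hom_A(\Hom_A(P,I),I).
\]
Here $P\otimes\chi$ is a quasi-isomorphism because $P$ is a bounded below complex of free modules. The Hom-evaluation map $\theta$ is an isomorphism of complexes, because in each degree only finitely many $P_i$ and $I_j$ contribute and each $P_i$ is finite free. This replaces the thick-subcategory and way-out steps by a degreewise finiteness check, and it isolates exactly where boundedness of $I$ and finiteness of the $P_i$ enter.

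Your truncation argument is longer. Its advantage is that it handles finite generation of $H_n(M^\dagger)$ and biduality by a single mechanism, whereas the evaluation trick says nothing about the finiteness part.
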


The duality functor provides a natural relationship between Betti and Bass numbers, allowing us to translate the established bounds on curvature and complexity directly into bounds on injective curvature and injective complexity.

\begin{prop} \cite[Theorem 18.2.32]{CFH24}  \label{bettibassdual}
Let $M \in \DbfA$. One has
\[
\mu_A^n(M) = \beta_n^A(M^\dagger) 
\]
for all $n$. \qed
\end{prop}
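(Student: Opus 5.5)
The plan is to compute $\Ext_A^n(k_A, M)$ directly by rewriting $M$ as a double dual and then using adjunction to move $k_A$ across. The normalization $\RHom_A(k_A,\omega_A)\simeq k_A$ will turn the result into a $k_A$-linear dual of $k_A \otimes_A^\L M^\dagger$. Throughout I use homological indexing, so that $\Ext_A^n(k_A,M) = H_{-n}\RHom_A(k_A,M)$.

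First I check that everything is homologically finite. Since $\omega_A$ has finite injective dimension and $M\in\DbfA$, the complex $M^\dagger=\RHom_A(M,\omega_A)$ has bounded homology. It is also homologically finite by the Grothendieck duality proposition, so $M^\dagger\in\DbfA$. In particular each $\beta_n^A(M^\dagger)$ is finite.

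Next, by that same proposition, $M\simeq M^{\dagger\dagger}=\RHom_A(M^\dagger,\omega_A)$. Hence
\[
\RHom_A(k_A,M)\simeq \RHom_A(k_A,\RHom_A(M^\dagger,\omega_A))\simeq \RHom_A(k_A\otimes_A^\L M^\dagger,\omega_A)
\]
by derived tensor--hom adjunction. The complex $X:=k_A\otimes_A^\L M^\dagger$ is naturally a complex of $k_A$-vector spaces; it comes from the $A$-algebra $k_A$. I now apply adjunction for restriction of scalars along $A\to k_A$:
\[
\RHom_A(X,\omega_A)\simeq \RHom_{k_A}(X,\RHom_A(k_A,\omega_A))\simeq \RHom_{k_A}(X,k_A),
\]
where the last step uses that $\omega_A$ is normalized. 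Over the field $k_A$, the complex $X$ is quasi-isomorphic to its homology $\bigoplus_i \Sigma^i\Tor_i^A(k_A,M^\dagger)$. Moreover $\RHom_{k_A}(-,k_A)$ is just the graded vector space dual, so
\[
H_{-n}\RHom_A(k_A,M)\cong \Hom_{k_A}\bigl(\Tor_n^A(k_A,M^\dagger),k_A\bigr).
\]
Since $\Tor_n^A(k_A,M^\dagger)$ is finite dimensional, taking ranks gives $\mu_A^n(M)=\beta_n^A(M^\dagger)$.

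The main obstacle is justifying the second adjunction: that $\RHom_A(X,\omega_A)\simeq\RHom_{k_A}(X,\RHom_A(k_A,\omega_A))$ for a complex $X$ of $k_A$-modules, compatibly with the identification of $X$ with its homology. I would handle this concretely. Replace $\omega_A$ by a bounded complex of injectives $I$, and replace $M^\dagger$ by a minimal free resolution $G$, so that $X=k_A\otimes_A G$ has zero differential. Then the isomorphism $\Hom_A(X,I)\cong\Hom_{k_A}(X,\Hom_A(k_A,I))$ is the ordinary adjunction at the level of complexes. The complex $\Hom_A(k_A,I)$ is injective over $k_A$ and computes $\RHom_A(k_A,\omega_A)\simeq k_A$, and every $k_A$-complex is injective-resolved trivially. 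So no further derived corrections are needed, and keeping track of the degree shift $n\leftrightarrow -n$ completes the argument.
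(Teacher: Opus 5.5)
Your argument is correct. The paper gives no proof of this proposition; it only cites \cite[Theorem 18.2.32]{CFH24}. You instead derive it directly, in five steps: biduality $M \simeq M^{\dagger\dagger}$; the swap adjunction, giving $\RHom_A(k_A \otimes_A^{\L} M^\dagger, \omega_A)$; adjunction along $A \to k_A$; the normalization $\RHom_A(k_A,\omega_A)\simeq k_A$, which is exactly what rules out a degree shift; and vector-space duality over $k_A$. The finite-dimensionality of $\Tor_n^A(k_A,M^\dagger)$ then ensures that dualizing preserves rank. Your concrete check, using a bounded injective model of $\omega_A$ and a minimal free resolution of $M^\dagger$, is a sound way to settle the second adjunction.

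Beyond brevity on the citation side, your self-contained route makes the needed generality visible. Its only inputs are biduality and finiteness of the Betti numbers of $M^\dagger$, so the same argument works verbatim for $M \in \DnegfA$, where $M^\dagger \in \DposfA$. That is the generality in which the paper actually uses the proposition. In the proof of Theorem~\ref{injcurvcx} it is applied to $(F^{e\cdot\varphi})_*N$ with $N \in \DnegfS$, whereas the statement as quoted covers only $\DbfA$.
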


Finally, we show that finite maps are compatible with the duality functor: the dual of a pushforward is the pushforward of the dual.

\begin{lemma} \label{dualizinghom}
Let  $\sigma\colon A \to S$ be a finite map. Then $\omega_S \cong \RHom_A(S, \omega_A)$ is a normalized dualizing complex for $S$ and for $N \in \DbfS$ we have an isomorphism
\[
\RHom_A(\sigma_*N, \omega_A) \cong \sigma_* \RHom_S(N, \omega_S).
\]
That is, 
\[
(\sigma_*N)^\dagger \cong \sigma_*(N^\dagger).
\]
\end{lemma}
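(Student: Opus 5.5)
The plan has three parts, carried out in this order.

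\emph{Step 1: $\omega_S$ is a dualizing complex for $S$.} We set $\omega_S := \RHom_A(S, \omega_A)$, regarded as a complex of $S$-modules through the $S$-action on the first argument. Since $\sigma$ is finite, $S \in \DbfA$, so $\sigma_*\omega_S = \RHom_A(\sigma_* S, \omega_A)$ lies in $\DnegfA$. Boundedness holds because $\omega_A$ has finite injective dimension and $S$ is a finitely generated module. Finite generation of the homology over $A$ gives finite generation over $S$.

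\emph{Step 2: the adjunction isomorphism.} The key tool is the derived Hom--tensor adjunction for restriction of scalars along the finite map $\sigma$. For $N \in \DS$ it gives
\[
\RHom_A(\sigma_*N, \omega_A) \simeq \RHom_S(N, \RHom_A(S,\omega_A)) = \RHom_S(N, \omega_S),
\]
naturally in $N$, as complexes of $S$-modules. To see this, replace $\omega_A$ by a bounded complex of injective $A$-modules $I$. Then $\Hom_A(S, I)$ is a complex of injective $S$-modules, and the underived adjunction $\Hom_A(N, I) \cong \Hom_S(N, \Hom_A(S, I))$ computes both sides. Applying $\sigma_*$ to the right-hand side gives the stated isomorphism $(\sigma_*N)^\dagger \cong \sigma_*(N^\dagger)$. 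The same argument shows that $\omega_S$ has finite injective dimension over $S$: $\Hom_A(S,I)$ is a bounded complex of injectives.

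\emph{Step 3: the remaining dualizing and normalization properties.} Taking $N = \omega_S$ in Step 2,
\[
\RHom_S(\omega_S,\omega_S) \simeq \RHom_A(\sigma_*\omega_S, \omega_A) = \RHom_A(\RHom_A(S,\omega_A),\omega_A) = (\sigma_* S)^{\dagger\dagger} \simeq S,
\]
using biduality over $A$. One must check that the resulting isomorphism is the homothety map, which follows from naturality. For normalization, $\sigma$ is a finite local map, so the residue field $\ell$ of $S$ is a finite extension of $k_A$. Step 2 then gives
\[
\sigma_*\RHom_S(\ell, \omega_S) \simeq \RHom_A(\ell, \omega_A) \simeq \RHom_A(k_A^{\oplus d}, \omega_A) \simeq k_A^{\oplus d},
\]
where $d = [\ell : k_A]$, using that $\omega_A$ is normalized. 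Hence $\RHom_S(\ell,\omega_S)$ is concentrated in degree $0$, and its degree-$0$ part is an $\ell$-vector space of $k_A$-dimension $d$, that is, it is $\ell$ itself.

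The main obstacle I expect is this bookkeeping in Step 3: confirming that the homothety $S \to \RHom_S(\omega_S,\omega_S)$ is exactly the isomorphism obtained, and tracking the module structures carefully. Neither is deep, and each can be cited from the treatment of dualizing complexes in \cite[\S 18.2]{CFH24}.
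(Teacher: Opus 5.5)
Your proposal is correct, and for the main isomorphism it follows the paper's argument. The paper writes $\sigma_*N \simeq S \otimes^\L_S N$ and invokes derived Hom--tensor adjunction \cite[Proposition 12.3.17]{CFH24}; you verify the same adjunction concretely, using a bounded injective resolution $I$ of $\omega_A$ and the fact that $\Hom_A(S,I)$ is a bounded complex of injective $S$-modules. The only difference is that the paper cites \cite[Theorem 18.2.6]{CFH24} for $\RHom_A(S,\omega_A)$ being a normalized dualizing complex for $S$, whereas you derive it from the adjunction itself: finite injective dimension from $\Hom_A(S,I)$, the homothety via biduality of $\sigma_*S$ over $A$, and normalization via $\sigma_*\ell \cong k_A^{\oplus d}$ (here $\sigma$ is automatically local, since $S$ is integral over $A$). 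That derivation is valid and makes the argument self-contained.
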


\begin{proof}
Since $S$ is finite over $A$,  $\omega_S \cong \RHom_A(S, \omega_A)$ is a normalized dualizing complex for $S$ by \cite[Theorem 18.2.6]{CFH24}. We have
\[
\begin{aligned}
\RHom_A(\sigma_*N, \omega_A) &\simeq \RHom_A(S \otimes_S^\L N, \omega_A) \\
&\simeq  \sigma_* \RHom_S(N, \RHom_A(S, \omega_A)) \\
&\simeq \sigma_* \RHom_S(N, \omega_S)
\end{aligned}
\]
where the first quasi-isomorphism follows from $N \otimes_S^\L S \simeq N$ and the second is by derived hom-tensor adjunction \cite[Proposition 12.3.17]{CFH24}.
\end{proof}

At last, we prove the desired bounds on injective curvature and complexity. 

\begin{theorem} \label{injcurvcx}
Let $\varphi\colon R \to S$ be a finite, flat map of local rings, $N \in \DnegfS$ and assume $N \not \simeq 0$. Then for $\ell$ the residue field of $S$, we have
\[
\injcurv_A((F^{e \cdot \varphi})_* N) \geq \curv_{S/\m S}(\ell) \quad \text{and} \quad \injcx_A((F^{e \cdot \varphi})_* N) \geq \cx_{S/\m S}(\ell).
\]
Equality holds, for example, if $N$ has finite injective dimension over $R$. 
\end{theorem}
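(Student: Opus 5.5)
The plan is to reduce to Theorem \ref{relfrobcurvcx} via Grothendieck duality, as set up in Proposition \ref{bettibassdual} and Lemma \ref{dualizinghom}. As in the proof of Theorem \ref{relfrobcurvcx}, we write $F$ for $F^e$, so that $A = S\otimes_R F_*R$ and $\sigma = F^\varphi\colon A \to S$. We assume $A$ admits a normalized dualizing complex $\omega_A$; for $F$-finite rings this is standard. Since $\varphi$ is finite, $F^\varphi$ is finite, so Lemma \ref{dualizinghom} applies. It gives the normalized dualizing complex $\omega_S = \RHom_A(S,\omega_A)$ and the isomorphism
\[
\bigl(F^\varphi_* N\bigr)^\dagger \simeq F^\varphi_*\bigl(N^\dagger\bigr),
\]
where $N^\dagger = \RHom_S(N,\omega_S)$.

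\textbf{Case $N \in \DbfS$.} Here $F^\varphi_*N \in \DbfA$, and Proposition \ref{bettibassdual} gives
\[
\mu_A^n(F^\varphi_* N) = \beta_n^A\bigl((F^\varphi_*N)^\dagger\bigr) = \beta_n^A\bigl(F^\varphi_*(N^\dagger)\bigr)
\]
for all $n$. Hence $\injcurv_A(F^\varphi_*N) = \curv_A(F^\varphi_* N^\dagger)$, and likewise for complexity. Duality sends $\DbfS$ to $\DbfS$, so $N^\dagger \in \DbfS$, and $N^\dagger \not\simeq 0$ because $N^{\dagger\dagger}\simeq N$. Theorem \ref{relfrobcurvcx} applied to $N^\dagger$ then yields both inequalities.

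\textbf{Case $N \in \DnegfS$ unbounded below.} This is where I expect the main obstacle, because Proposition \ref{bettibassdual} is stated only for bounded complexes. I would first check that its proof, Matlis/Grothendieck duality computing $\RHom_A(k_A,M)$ as the Matlis dual of $k_A\otimes^\L_A M^\dagger$ up to a shift, works verbatim for $M \in \DnegfA$, with $M^\dagger \in \DposfA$. If it does, the same argument goes through, with Theorem \ref{relfrobcurvcx} applied to $N^\dagger \in \DposfS$. If it does not, I would fall back on a direct argument dual to Lemma \ref{gencurvcx}: show $\mu^n_A(M) = \mu^n_B(\RHom_A(B,M))$ for $B = A/\m A$, which holds because $A$ is flat over $F_*R$. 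I would then use the dual of Lemma \ref{frobonS},
\[
\RHom_A(B, F^\varphi_*N) \simeq F^{S/\m S}_*\,\RHom_S(S/\m S, N),
\]
and finish with Remark \ref{AHIYgeq}(ii).

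\textbf{Equality.} We get equality precisely when the relevant complex over $S/\m S$ is bounded, so that Remark \ref{AHIYgeq}(iii) applies. In the duality route, this means $N^\dagger$ has finite flat dimension over $R$. In the direct route, it means $\RHom_S(S/\m S,N) \simeq \RHom_R(k,N)$ is bounded. Since $\Ext^n_R(k,N)=0$ for $n\gg0$ exactly when $\mathrm{injdim}_R N<\infty$, the equality statement follows.
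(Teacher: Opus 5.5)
Your proposal is correct, and its main line is the paper's proof: Lemma~\ref{dualizinghom} and Proposition~\ref{bettibassdual} give $\mu^n_A(F^\varphi_*N)=\beta^A_n\bigl(F^\varphi_*(N^\dagger)\bigr)$, Theorem~\ref{relfrobcurvcx} applied to $N^\dagger$ gives the inequalities, and equality holds when $\flatdim_R N^\dagger<\infty$, which is equivalent to $\mathrm{injdim}_R N<\infty$. You are also right that these two duality results are stated only for bounded complexes while the theorem allows $N\in\DnegfS$ (the paper applies them without comment); they do extend, since $(-)^\dagger$ is a duality between $\DposfA$ and $\DnegfA$, and your fallback route is also valid and avoids dualizing complexes altogether, with one small correction: $\mu^n_A(M)=\mu^n_B(\RHom_A(B,M))$ is pure adjunction, and flatness is needed instead for $S\otimes^\L_A B\simeq S/\m S$ in your dual of Lemma~\ref{frobonS}.
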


\begin{proof}
Apply \hyperref[dualizinghom]{Lemma \ref*{dualizinghom}} to $\sigma= F^{e \cdot \varphi}$. Together with \hyperref[bettibassdual]{Proposition \ref*{bettibassdual}} we have
\[
\mu_A^n\left((F^{e \cdot \varphi})_* N\right) = \beta_n^A \left(((F^{e \cdot \varphi})_* N)^\dagger \right) = \beta_n^A \left((F^{e \cdot \varphi})_* (N^\dagger)\right).
\]
Applying \hyperref[relfrobcurvcx]{Theorem \ref*{relfrobcurvcx}} to $N = N^\dagger$ gives
\[
\injcurv_A((F^{e \cdot \varphi})_* N) = \curv_A\left((F^{e \cdot \varphi})_* (N^\dagger)\right) \geq \curv_{S/\m S} (\ell)
\]
and 
\[
\injcx_A((F^{e \cdot \varphi})_* N) = \cx_A\left((F^{e \cdot \varphi})_* (N^\dagger)\right) \geq \cx_{S/\m S} (\ell)
\]
Moreover, as in $\ref{relfrobcurvcx}$, we have equality when $N^\dagger$ has finite flat dimension over $R$. This occurs if and only if $N$ has finite injective dimension over $R$.
\end{proof}

\bibliographystyle{amsplain}
\enlargethispage{1\baselineskip}
\bibliography{LocalBib}

@Preamble{
 "\def \noopsort #1{}"
}

@article {AHIY12,
    AUTHOR = {Avramov, Luchezar L. and Hochster, Melvin and Iyengar,
              Srikanth B. and Yao, Yongwei},
     TITLE = {Homological invariants of modules over contracting
              endomorphisms},
   JOURNAL = {Math. Ann.},
  FJOURNAL = {Mathematische Annalen},
    VOLUME = {353},
      YEAR = {2012},
    NUMBER = {2},
     PAGES = {275--291},
      ISSN = {0025-5831,1432-1807},
   MRCLASS = {13D05 (13A35 13D02 13D07 13D09)},
  MRNUMBER = {2915536},
MRREVIEWER = {Irena\ Swanson},
       DOI = {10.1007/s00208-011-0682-z},
       URL = {https://doi.org/10.1007/s00208-011-0682-z},
}

@article {Avr96,
    AUTHOR = {Avramov, Luchezar L.},
     TITLE = {Modules with extremal resolutions},
   JOURNAL = {Math. Res. Lett.},
  FJOURNAL = {Mathematical Research Letters},
    VOLUME = {3},
      YEAR = {1996},
    NUMBER = {3},
     PAGES = {319--328},
      ISSN = {1073-2780},
   MRCLASS = {13D03 (13D05)},
  MRNUMBER = {1397681},
MRREVIEWER = {Alex\ Martsinkovsky},
       DOI = {10.4310/MRL.1996.v3.n3.a3},
       URL = {https://doi.org/10.4310/MRL.1996.v3.n3.a3},
}

@article {Kun69,
    AUTHOR = {Kunz, Ernst},
     TITLE = {Characterizations of regular local rings of characteristic
              {$p$}},
   JOURNAL = {Amer. J. Math.},
  FJOURNAL = {American Journal of Mathematics},
    VOLUME = {91},
      YEAR = {1969},
     PAGES = {772--784},
      ISSN = {0002-9327,1080-6377},
   MRCLASS = {13.95},
  MRNUMBER = {252389},
MRREVIEWER = {M.\ Nagata},
       DOI = {10.2307/2373351},
       URL = {https://doi.org/10.2307/2373351},
}

@article {AIM06,
    AUTHOR = {Avramov, Luchezar L. and Iyengar, Srikanth and Miller,
              Claudia},
     TITLE = {Homology over local homomorphisms},
   JOURNAL = {Amer. J. Math.},
  FJOURNAL = {American Journal of Mathematics},
    VOLUME = {128},
      YEAR = {2006},
    NUMBER = {1},
     PAGES = {23--90},
      ISSN = {0002-9327,1080-6377},
   MRCLASS = {13D07},
  MRNUMBER = {2197067},
MRREVIEWER = {Lars\ Winther\ Christensen},
       URL = {http://muse.jhu.edu/journals/american_journal_of_mathematics/v128/128.1avramov.pdf},
}

@book {CFH24,
    AUTHOR = {Christensen, Lars Winther and Foxby, Hans-Bj{\o}rn and Holm,
              Henrik},
     TITLE = {Derived category methods in commutative algebra},
    SERIES = {Springer Monographs in Mathematics},
 PUBLISHER = {Springer, Cham},
      YEAR = {[2024] \copyright 2024},
     PAGES = {xxiii+1119},
      ISBN = {978-3-031-77452-2; 978-3-031-77453-9},
   MRCLASS = {13D09},
  MRNUMBER = {4890472},
       DOI = {10.1007/978-3-031-77453-9},
       URL = {https://doi.org/10.1007/978-3-031-77453-9},
}

@book {BH98,
    AUTHOR = {Bruns, Winfried and Herzog, J{\"u}rgen},
     TITLE = {Cohen-{M}acaulay rings},
    SERIES = {Cambridge Studies in Advanced Mathematics},
    VOLUME = {39},
 PUBLISHER = {Cambridge University Press, Cambridge},
      YEAR = {1998},
      edition={Revised},
     PAGES = {xii+403},
      ISBN = {0-521-41068-1},
   MRCLASS = {13H10 (13-02)},
  MRNUMBER = {1251956},
MRREVIEWER = {Matthew\ Miller},
}

@article {Rad92,
AUTHOR = {Radu, Nicolae},
TITLE = {Une classe d'anneaux noeth\'eriens},
JOURNAL = {Rev. Roumaine Math. Pures Appl.},
FJOURNAL = {Revue Roumaine de Math\'ematiques Pures et Appliqu\'ees.
Romanian Journal of Pure and Applied Mathematics},
VOLUME = {37},
YEAR = {1992},
NUMBER = {1},
PAGES = {79--82},
ISSN = {0035-3965},
MRCLASS = {13E05 (13F40 13H05)},
MRNUMBER = {1172271},
MRREVIEWER = {Takashi\ Harase},
}

@article {And93,
AUTHOR = {Andr\'e, Michel},
TITLE = {Homomorphismes r\'eguliers en caract\'eristique {$p$}},
JOURNAL = {C. R. Acad. Sci. Paris S\'er. I Math.},
FJOURNAL = {Comptes Rendus de l'Acad\'emie des Sciences. S\'erie I. Math\'ematique},
VOLUME = {316},
YEAR = {1993},
NUMBER = {7},
PAGES = {643--646},
ISSN = {0764-4442},
MRCLASS = {13D03 (13A35)},
MRNUMBER = {1214408},
MRREVIEWER = {Marco\ Fontana},
}

@Inproceedings{Avr98, 
author ={Avramov, Luchezar L.}, 
title ={{Infinite free resolutions}}, 
booktitle={Six lectures on commutative algebra}, 
editor ={J. Elias and J. M. Giral and R. M. Mir\'o-Roig and S. Zarzuela}, series =PIM, 
volume =166, 
publisher=Birk, 
year =1998 }

@article {McD25,
    AUTHOR = {McDonald, Peter M.},
     TITLE = {Homological properties of the relative {F}robenius morphism},
   JOURNAL = {Proc. Amer. Math. Soc.},
  FJOURNAL = {Proceedings of the American Mathematical Society},
    VOLUME = {153},
      YEAR = {2025},
    NUMBER = {12},
     PAGES = {5013--5026},
      ISSN = {0002-9939,1088-6826},
   MRCLASS = {13A35 (13B10 13D05 13H10)},
  MRNUMBER = {4989621},
       DOI = {10.1090/proc/17182},
       URL = {https://doi.org/10.1090/proc/17182},
}

@article {Avr99,
    AUTHOR = {Avramov, Luchezar L.},
     TITLE = {Locally complete intersection homomorphisms and a conjecture
              of {Q}uillen on the vanishing of cotangent homology},
   JOURNAL = {Ann. of Math. (2)},
  FJOURNAL = {Annals of Mathematics. Second Series},
    VOLUME = {150},
      YEAR = {1999},
    NUMBER = {2},
     PAGES = {455--487},
      ISSN = {0003-486X,1939-8980},
   MRCLASS = {13D03 (13H10 14M10)},
  MRNUMBER = {1726700},
MRREVIEWER = {Paul\ Roberts},
       DOI = {10.2307/121087},
       URL = {https://doi.org/10.2307/121087},
}

@article {Hir64,
    AUTHOR = {Hironaka, Heisuke},
     TITLE = {Resolution of singularities of an algebraic variety over a
              field of characteristic zero. {I}, {II}},
   JOURNAL = {Ann. of Math. (2)},
  FJOURNAL = {Annals of Mathematics. Second Series},
    VOLUME = {79},
      YEAR = {1964},
     PAGES = {109--203; {\ 79 (1964), 205--326}},
      ISSN = {0003-486X},
   MRCLASS = {14.18},
  MRNUMBER = {199184},
MRREVIEWER = {Jean\ Giraud},
       DOI = {10.2307/1970547},
       URL = {https://doi.org/10.2307/1970547},
}

@article {Dum96,
    AUTHOR = {Dumitrescu, Tiberiu},
     TITLE = {Regularity and finite flat dimension in characteristic
              {$p>0$}},
   JOURNAL = {Comm. Algebra},
  FJOURNAL = {Communications in Algebra},
    VOLUME = {24},
      YEAR = {1996},
    NUMBER = {10},
     PAGES = {3387--3401},
      ISSN = {0092-7872,1532-4125},
   MRCLASS = {13A35},
  MRNUMBER = {1402567},
MRREVIEWER = {Ian\ M.\ Aberbach},
       DOI = {10.1080/00927879608825755},
       URL = {https://doi.org/10.1080/00927879608825755},
}

\end{document}